 \newtheorem{thm}{Theorem}[section]
\newtheorem{rem}[thm]{Remark}
\newtheorem{defi}[thm]{Definition}
\newtheorem{nota}[thm]{Notation}
\newtheorem{lem}[thm]{Lemma}
 \newtheorem{prop}[thm]{Proposition}
\newtheorem{cor}[thm]{Corollary}
 \newtheorem{conj}[thm]{Conjecture}
 \newtheorem{prob}[thm]{Problem}
\def\f#1{{\mathbb{F}}_{#1}}
\begin{document}
\title{Rational points on complete symmetric hypersurfaces over finite fields}
%\title{A Computational Problem on a Generalized Vandermonde Determinant}
\author{Jun Zhang\footnote{Jun Zhang, School of Mathematical Sciences, Capital Normal University, Beijing 100048, P.R. China. E-mail: junz@cnu.edu.cn} \and Daqing Wan
	\thanks{Daqing Wan, Department of Mathematics, University of California, Irvine, CA 92697, USA. Email: dwan@math.uci.edu}}

\date{}
\maketitle
\begin{abstract}
For any affine hypersurface defined by a complete symmetric polynomial in $k\geq 3$ variables of degree $m$ over the finite field $\f{q}$ 
of $q$ elements, a special case of our theorem says that this hypersurface  has at least $6q^{k-3}$ rational points over $\f{q}$ if $1\leq m \leq q-3$ and $q$ is odd. A key ingredient in our proof is Segre's classical theorem on ovals in finite projective planes. 

{\bf Keywords: }Rational point, complete symmetric polynomial, generalized Vandermonde determinant, Reed-Solomon code.
\end{abstract}

\section{Introduction}

Let $\f{q}$ denote the finite field of $q$ elements with characteristic $p$. 
The study of $\f{q}$-rational points on a hypersurface defined by a symmetric 
polynomial over $\f{q}$ has many important applications. The connection with coding theory and finite geometry will become clear 
later in this paper. Another recent example is given in ~\cite{BB20}, where a 
family of symmetric hypersurfaces with many points is used to construct minimal codes from
cutting blocking sets. 

There are three classes of symmetric polynomials introduced by Newton. These are 
power sum symmetric polynomials (Fermat hypersurfaces), elementary symmetric polynomials 
and complete symmetric polynomials.  The first two classes have been studied extensively in number theory. 
In this paper, we apply tools from coding theory and finite geometry to 
investigate the third class, namely,  the complete symmetric 
polynomials as defined below. 

\begin{defi} 
	The homogeneous complete symmetric polynomial of degree $m$ in the $k$-variables $\{x_1,x_2,\cdots,x_k\}$ 
	is defined by 
	$$h_m(x_1,x_2,\cdots,x_k)=\sum_{1\leq i_1\leq i_2\leq \cdots \leq i_m\leq k}x_{i_1}x_{i_2}\cdots x_{i_m}.$$
	%=\sum_{j_1+\cdots+j_k=m,\,j_i\geq 0}x_1^{j_1}\cdots x_k^{j_k}.$$
	%Namely, $h_m(x_1,..., x_k)$ is the coefficient of $t^m$ in the power series 
	%$$\frac{1}{(1-x_1t)\cdots (1-x_kt)} =\sum_{m=0}^{\infty} h_m(x_1,x_2,\cdots,x_k)t^m.$$ 
\end{defi}
By definition, we have $h_0(x_1,x_2,\cdots,x_k)=1$,   
$$h_1(x_1,x_2,\cdots,x_k)=x_1+x_2+\cdots+x_k,$$
$$h_2(x_1,x_2,\cdots,x_k)=\sum_{i=1}^kx_i^2+\sum_{1\leq i< j\leq k}x_ix_j,$$
etc.  Just like the elementary symmetric polynomials, the complete symmetric polynomials 
$h_m(x_1,..., x_k)$ ($0\leq m\leq k)$ generate the algebra of all symmetric polynomials in $k$-variables 
over $\mathbb{Z}$. In characteristic zero, the projective hypersurface defined by $h_m(x_1,..., x_k)=0$ is smooth for all $k\geq 2$. 
In characteristic $p>0$, the singular locus (even its size) of  the projective hypersurface defined by $h_m(x_1,..., x_k)=0$ is unknown. 

\begin{defi} 
	A complete symmetric polynomial of degree $m$ over $\f{q}$ in the $k$-variables $\{x_1,x_2,\cdots,x_k\}$ is defined as
	$$h(x_1,..., x_k):=\sum_{e=0}^m a_eh_e(x_1,x_2,\cdots,x_k),$$
	%=\sum_{e=0}^m a_e\sum_{j_1+j_2+\cdots+j_k=e,\ j_i\geq 0}x_1^{j_1}x_2^{j_2}\cdots x_k^{j_k},$$
	where $a_e\in \f{q}$ and $a_m\not=0$. 
\end{defi}
Thus, a complete symmetric polynomial 
in $k$-variables is simply a linear combination of the homogeneous complete symmetric polynomials in $k$-variables.  
Equivalently, a complete symmetric polynomial $h(x_1,..., x_k)$ is simply a polynomial in $k$-variables where all terms of the 
same total degree have the same coefficients. We stress that such polynomials are not homogeneous in general. 
We are interested in the number of $\f{q}$-rational points on the 
affine hypersurface defined by a complete symmetric polynomial $h(x_1,..., x_k)$ over $\f{q}$.  
As noted above, the singular locus (even its size) of the affine hypersurface defined by 
$h(x_1,..., x_n) =0$ can be quite complicated, especially in characteristic $p$.  

\begin{nota} 
	Let $h(x_1,..., x_k)$ be a complete symmetric polynomial of degree $m$ in $k$-variables over $\f{q}$. Let
	$$N_q(h):= \#\{(x_1,..., x_k)\in \f{q}^k\, |\, h(x_1,..., x_k)=0\},$$
	denote the number of $\f{q}$-rational points on the 
	affine hypersurface defined by $h(x_1,..., x_k)=0$. 
\end{nota}

Our basic problem is to study when $N_q(h)>0$ and to give a good lower bound when it is positive. 
The problem is trivial if $m=0$ and thus $h$ is a constant. 
We shall assume that $m>0$ and so $h$ is not a constant. 
A consequence of our main theorem is the following result.

\begin{thm}\label{thm1} Let $h(x_1,..., x_k)$
	be a complete symmetric polynomial in $k\geq 3$ variables over $\f{q}$ of 
	degree $m$ with $1\leq m\leq q-3$.  If $q$ is odd, then 
	$$N_q(h)\geq 6q^{k-3}.$$ 
	\end{thm}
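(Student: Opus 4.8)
The plan is to reduce to the three-variable case $k=3$ and then recast that case as a question about arcs in $\mathrm{PG}(2,q)$, where Segre's theorem applies. For the reduction, fix $c_4,\dots,c_k\in\f{q}$ and set $\tilde h(x_1,x_2,x_3):=h(x_1,x_2,x_3,c_4,\dots,c_k)$. From the generating function identity $\sum_{e\ge 0}h_e(x_1,\dots,x_k)t^e=\prod_{i=1}^k(1-x_it)^{-1}$ one reads off $h_e(x_1,x_2,x_3,c_4,\dots,c_k)=\sum_{j=0}^e d_j\,h_{e-j}(x_1,x_2,x_3)$, where $\sum_{j\ge 0}d_jt^j=\prod_{i=4}^k(1-c_it)^{-1}$ has $d_0=1$. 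Hence $\tilde h=\sum_{l=0}^m\bigl(\sum_{e=l}^m a_ed_{e-l}\bigr)h_l(x_1,x_2,x_3)$, whose coefficient of $h_m$ equals $a_md_0=a_m\neq 0$; so $\tilde h$ is again a complete symmetric polynomial of degree $m$ in three variables over $\f{q}$. Summing over the $q^{k-3}$ choices of $(c_4,\dots,c_k)$ gives $N_q(h)=\sum N_q(\tilde h)$, so it suffices to prove $N_q(g)\ge 6$ for every complete symmetric polynomial $g$ of degree $m$ in three variables with $1\le m\le q-3$ and $q$ odd.

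Next I would reformulate the three-variable problem geometrically. Write $g=\sum_{e=0}^m a_eh_e$ and, dividing by $a_m$, assume $f(t):=\sum_{e=0}^m a_et^e$ is monic of degree $m$. Combining the bialternant formula $h_e(x_1,x_2,x_3)=\det(x_i^{\,e+2},x_i,1)/\det(x_i^{\,2},x_i,1)$ with multilinearity of the determinant in its first column gives
$$g(x_1,x_2,x_3)\cdot\!\!\prod_{1\le i<j\le 3}\!\!(x_i-x_j)\;=\;\det\!\begin{pmatrix}x_1^{2}f(x_1)&x_1&1\\ x_2^{2}f(x_2)&x_2&1\\ x_3^{2}f(x_3)&x_3&1\end{pmatrix}.$$
Hence, for pairwise distinct $x_1,x_2,x_3\in\f{q}$, we have $g(x_1,x_2,x_3)=0$ exactly when the points $\phi(x_i):=(x_i^{2}f(x_i):x_i:1)$ of $\mathrm{PG}(2,q)$ are collinear. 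Since $g$ is symmetric, a single collinear triple among the $q$ distinct points of the curve $C:=\{\phi(t):t\in\f{q}\}$ already produces the $3!=6$ ordered zeros we need, so it remains only to show that $C$ is not an arc.

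The final step is to rule out the arc possibility. Suppose, for contradiction, that no three points of $C$ are collinear. The line $\{Z=0\}$ misses $C$, and each of the other $q$ lines through $p_\infty:=(1:0:0)$ meets $C$ in exactly one point (the value of $t$ is recovered from $\phi(t)$); hence $C^{+}:=C\cup\{p_\infty\}$ is a set of $q+1$ points, no three collinear, i.e.\ an oval. As $q$ is odd, Segre's theorem forces $C^{+}$ to be a nondegenerate conic $\{Q=0\}$. Since $Q(p_\infty)=0$, the $X^2$-coefficient of $Q$ vanishes, so $Q=\alpha XY+\beta XZ+\gamma Y^2+\delta YZ+\varepsilon Z^2$ with $(\alpha,\beta)\neq(0,0)$ (else $Q$ is degenerate). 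Now
$$Q(\phi(t))\;=\;t^{2}f(t)(\alpha t+\beta)+\gamma t^{2}+\delta t+\varepsilon$$
has degree at most $m+3\le q$ and vanishes on all of $\f{q}$, so it is divisible by $t^{q}-t$; thus it is either identically zero or a nonzero scalar multiple of $t^{q}-t$. In the first case $t^{2}f(t)(\alpha t+\beta)$ would have degree at most $2$, forcing $\alpha=\beta=0$, a contradiction. In the second case, comparing leading terms forces $m=q-3$ and $\alpha\neq 0$, comparing the coefficients of $t^{0}$ and $t^{1}$ forces $\varepsilon=0$ and $\delta=-\alpha$, and divisibility by $t^{2}$ reduces the identity to $f(t)(\alpha t+\beta)=\alpha t^{q-2}-\gamma$, whence $\beta=-\alpha t_0$ and $\gamma=\alpha t_0^{\,q-2}$ with $t_0:=-\beta/\alpha$; substituting into the discriminant yields $\alpha^{2}\varepsilon-\alpha\beta\delta+\beta^{2}\gamma=\alpha^{3}(t_0^{q}-t_0)=0$, so $Q$ is degenerate — again a contradiction. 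Therefore $C$ has a collinear triple, $N_q(g)\ge 6$, and so $N_q(h)\ge 6q^{k-3}$.

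The reduction and the passage to a collinearity problem on $C$ are routine; the heart of the argument is ruling out an arc. For $m\le q-4$ the degree bound $\deg Q(\phi(t))\le m+3<q$ closes this immediately, but the endpoint $m=q-3$ is the genuinely delicate case: there one must pin down the exact shape of the quadratic form $Q$ compatible with both $Q(p_\infty)=0$ and ``$Q$ vanishes on $\phi(\f{q})$'', and then verify that every such $Q$ is degenerate, in contradiction with the nondegeneracy supplied by Segre's theorem. The only other point needing attention is the verification that adjoining $p_\infty$ to $C$ yields a genuine $(q+1)$-arc.
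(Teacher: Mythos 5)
Your proof is correct, and its skeleton matches the paper's: the specialization of $x_4,\dots,x_k$ to constants to reduce to $k=3$ (the paper uses the equivalent identity $h_m(x_1,\dots,x_k)=\sum_e h_e(x_1,x_2,x_3)h_{m-e}(x_4,\dots,x_k)$), the determinant identity $g\cdot\prod_{i<j}(x_i-x_j)=D_f$ with $f(t)=t^2\sum_e a_et^e$ (the paper's Proposition~\ref{fink}), and Segre's theorem as the decisive input. Where you genuinely diverge is in how Segre is brought to bear. The paper converts ``no three of the points $(f(t):t:1)$ are collinear'' into the statement that an extended Reed--Solomon generator matrix is MDS, passes to the dual code, and invokes the Seroussi--Roth and Ball classifications of MDS extensions to force $\deg f=k-1$; that machinery proves the stronger Theorem~\ref{thm4} for all $k\le p$ and all $k\ge\lfloor (q+1)/2\rfloor$ at once. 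You instead stay entirely inside $\mathrm{PG}(2,q)$: you adjoin $(1:0:0)$ to obtain a $(q+1)$-arc, apply Segre to get a conic $Q$ with no $X^2$ term, and dispose of the endpoint case $m=q-3$ by showing the forced coefficients $\varepsilon=0$, $\delta=-\alpha$, $\beta=-\alpha t_0$, $\gamma=\alpha t_0^{q-2}$ make the discriminant $\alpha^2\varepsilon-\alpha\beta\delta+\beta^2\gamma=\alpha^3(t_0^q-t_0)$ vanish, contradicting nondegeneracy. This is more elementary and fully self-contained for $k=3$ (the paper's ``self-contained'' route still uses Ball's theorem as a black box), and your explicit treatment of $m=q-3$ --- exactly where the naive degree bound $\deg Q(\phi(t))\le m+3\le q$ no longer yields an immediate contradiction --- is the genuinely delicate extra step, correctly carried out. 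The trade-off is that your argument is specific to $k=3$ and does not recover Theorem~\ref{thm4} for general $k$, which the paper needs elsewhere (e.g.\ for Theorem~\ref{thm3}).
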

\begin{rem} Let us compare this theorem with the relevant results in the literature.  
 A classical result of Waring \cite{War35} implies that if $N_q(h)>0$, then 
$N_q(h)\geq q^{k-m}$. This is apparently weaker than $6q^{k-3}$ if $m\geq 3$,  and trivial 
if $k\leq m$. 
The condition  $N_q(h)>0$ itself is highly non-trivial to check unless $h$ has no constant term. 
If one applies Deligne's theorem 
on the Weil conjecture, even in the sufficiently smooth case (the size of singular locus 
is already unknown), one would need 
to assume that the degree $m$ is small compared to $q$ in order to prove a 
non-trivial lower bound for $N_q(h)$. One would at least need 
something like $m =O(q^{\frac{1}{2}-\epsilon_k})$,  
where $\epsilon_k$ is a positive constant depending on $k$. 
If $k>m$, the classical Chevally-Waring-Ax-Katz type  
theorem implies $N_q(h)$ is divisible by $q^{\lceil \frac{k-m}{m}\rceil}$, see \cite{W95} for simple proofs of various such divisibility results.  Again, one needs to assume both $N_q(h)>0$ and $k>m$ in order to 
derive a non-trivial lower bound for $N_q(h)$. Our theorem above has several new features. 
It does not assume that the degree $m$ is small compared to $q$. It does not assume that  
$N_q(h)>0$ either. The lower bound $6q^{k-3}$ works for all degree $1\leq m \leq q-3$. 
When $m\geq q-2$, the problem becomes more complicated as $m$ grows. 
But as we shall see, a stronger version of the problem (with distinct coordinate rational points) 
in the large degree $m$ case can be reduced to the smaller degree $m<q$ case. 
\end{rem}
\begin{rem}
We note that the condition $k\geq 3$ in the theorem cannot be dropped. For instance, if $k=2$, 
one checks that 
$$h_m(x_1, x_2) = \frac{x_1^{m+1} - x_2^{m+1}}{x_1-x_2}.$$
If $(m+1, p(q-1))=1$, then the only $\f{q}$-rational 
point of $h_m(x_1, x_2)=0$ is the origin and so $N_q(h_m)=1$. 
Taking $k=2, q=5, m=2$, one finds that $N_q(h_2(x_1, x_2)) =1 < 6/5 = 6 q^{2-3}$. 
%If one further takes $m$ such that 
%$(m, q-1)>1$, then $N_q(T_m-\alpha) =0$ for any $\alpha\in \f{q}^*$ such that $\alpha/(m+1)$ is not an $(m, q-1)$-th  
%power in $\f{q}$. 
	
\end{rem}

For even $q$, the problem is more subtle. We have the following conjecture giving a slightly weaker bound. 

\begin{conj}\label{thm2} Let $h(x_1,..., x_k)$
	be a complete symmetric polynomial in $k\geq 4$ variables over $\f{q}$ of 
	degree $m$ with $1\leq m\leq q-4$.  If $q$ is even, then 
	$$N_q(h)\geq 24q^{k-4}.$$ 
	\end{conj}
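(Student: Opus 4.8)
The plan is to reduce the even-characteristic statement to a single base case in four variables, and then to attack that base case through arc theory in $PG(3,q)$, exactly paralleling the role played by Segre's oval theorem in the odd case of Theorem~\ref{thm1}. First I would set up an inductive fibration in the number of variables. For any $c\in\f{q}$, substituting $x_k=c$ and using the identity $h_e(x_1,\dots,x_{k-1},c)=\sum_{j=0}^{e}c^{j}h_{e-j}(x_1,\dots,x_{k-1})$, one checks that $h(x_1,\dots,x_{k-1},c)$ is again a complete symmetric polynomial in $k-1$ variables of the \emph{same} degree $m$, its top coefficient being still $a_m\neq 0$. Summing the fibre counts over $c$ gives $N_q(h)=\sum_{c\in\f{q}}N_q(h_c)$, so an inductive hypothesis $N_q(h_c)\ge 24q^{(k-1)-4}$ immediately yields $N_q(h)\ge 24q^{k-4}$. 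The bound $1\le m\le q-4$ is preserved at each step and the recursion terminates at $k=4$, so everything reduces to proving $N_q(h)\ge 24$ when $k=4$.

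For the base case it suffices, by symmetry of $h$, to exhibit a single zero with four \emph{distinct} coordinates, since its $4!=24$ permutations are then distinct ordered zeros. Writing $f(x)=\sum_{e=0}^{m}a_ex^{e}$ (of degree $m$) and using the Lagrange formula $h_e(x_1,\dots,x_k)=\sum_i x_i^{e+k-1}/\prod_{j\neq i}(x_i-x_j)$ valid for distinct coordinates, the equation $h(x_1,\dots,x_4)=0$ becomes the vanishing of the generalized Vandermonde determinant whose last column is $x_i^{3}f(x_i)$. Setting $g(x)=x^{3}f(x)$, this says exactly that the four points $P_{x_i}=(1:x_i:x_i^2:x_i^3f(x_i))$ of $PG(3,q)$ are coplanar. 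Hence $h$ has a distinct-coordinate zero if and only if the point set $\Phi=\{P_x:x\in\f{q}\}$ is \emph{not} an arc, i.e. some four of its points lie on a common plane.

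I would then argue by contradiction. If $\Phi$ were a $q$-arc, adjoining the limit point $(0:0:0:1)$ (the common value of $P_x$ as $x\to\infty$, which is also the vertex from which $\Phi$ projects onto the conic $(1:x:x^2)$) produces a $(q+1)$-arc $\Phi^{+}$ in $PG(3,q)$; here the hypothesis $m\le q-4$ guarantees $\deg g=m+3\le q-1$, which is what keeps the "at most three points per plane" bookkeeping consistent. Invoking a Segre-type classification of $(q+1)$-arcs in $PG(3,q)$ as normal rational curves ($\nrc$), $\Phi^{+}$ would be a twisted cubic. Because the four coordinate functions of a twisted cubic are binary cubic forms and three of ours are pinned to $1,x,x^{2}$, the fourth coordinate would be forced to agree with a cubic polynomial on all of $\f{q}$; as $\deg g\le q-1$, two such polynomials agreeing on $\f{q}$ are equal, forcing $\deg g\le 3$ and hence $m\le 0$, contradicting $m\ge 1$. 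This would finish the base case, and with it the conjecture.

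The main obstacle is precisely the Segre-type classification invoked in the last step. For odd $q$, Segre's oval theorem and its $PG(3,q)$ analogue force near-maximal arcs to be normal rational curves, and this is exactly the rigidity that drives Theorem~\ref{thm1}. For even $q$ this rigidity genuinely fails: by the Casse--Glynn examples, $PG(3,q)$ with $q=2^{h}$ carries $(q+1)$-arcs projectively inequivalent to the twisted cubic, namely the curves $(1:t:t^{2^{e}}:t^{2^{e}+1})$ with $\gcd(e,h)=1$, so one cannot simply declare $\Phi^{+}$ a twisted cubic. The crux of any proof will be to show that the very rigid shape of $\Phi^{+}$, whose first three coordinates are tied to the standard conic $(1:x:x^{2})$, is incompatible with these non-classical arcs, thereby recovering just enough of Segre's conclusion to force the fourth coordinate to be cubic. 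Establishing this compatibility obstruction uniformly for all even $q$, rather than for small or special $q$, is exactly what keeps the statement at the level of a conjecture.
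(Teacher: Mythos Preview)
The statement you are attempting is Conjecture~\ref{thm2} in the paper, and the paper does \emph{not} prove it; the authors explicitly remark, in the proof block for Theorems~\ref{thm1} and~\ref{thm3}, that ``the case $k=4$ ($q$ even) of the above conjecture is still open.'' So there is no proof in the paper to compare against, only a reduction and an acknowledged gap.

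Your reduction from general $k$ to the base case $k=4$ via fibration over the last coordinate is exactly the argument the paper uses to reduce Theorem~\ref{thm1} to its $k=3$ base case, and exactly the mechanism the paper invokes when it says the $k=4$ instance of Conjecture~\ref{cor:zeroofcompsymmpoly} would imply Conjecture~\ref{thm2}. Your translation of the $k=4$ base case into a question about $(q+1)$-arcs in $PG(3,q)$ is equivalent to the paper's framework: the paper phrases it as the deep-hole classification for $RS_q(3)$, i.e.\ the MDS property of the extended matrix $M^E_g(q,3)$, which is precisely the arc condition on $\{(1:x:x^2:g(x)):x\in\f{q}\}\cup\{(0:0:0:1)\}$ that you write down. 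Your Lagrange-interpolation route to this identification is just a direct version of the paper's Proposition~\ref{fink}.

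Your diagnosis of the obstacle is also correct and matches the paper's. The Cheng--Murray conjecture (the coding-theoretic form of Conjecture~\ref{conj:zeroofcompsymmpoly}) is known only for $k\le p$ or $k\ge\lfloor(q+1)/2\rfloor$; for $q$ even and $k=4$ neither range applies once $q\ge 8$, and the Casse--Glynn non-classical $(q+1)$-arcs in $PG(3,2^h)$ are exactly why one cannot simply invoke a Segre-type rigidity statement. Your closing paragraph is therefore an accurate account of why the statement remains a conjecture: the outline would succeed if one could show that an arc of the constrained shape $(1:x:x^2:g(x))$ cannot be projectively equivalent to a non-classical arc, but no such result is available.

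In short, your proposal is not a proof and you are right that it is not one; it is, however, the same reduction the paper gives, with the missing ingredient correctly named.
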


For even $q$, unconditionally, we have the following significantly weaker result. 

\begin{thm}\label{thm3}Let $h(x_1,..., x_k)$
	be a complete symmetric polynomial in $k$ variables over $\f{q}$ of 
	degree $m$ with $1\leq m\leq q/2$.  If $q\geq 8$ is even and $k\geq q/2$, then 
	$$N_q(h)\geq \left({{q}\over {2}}\right)! \cdot q^{k-{{q}\over 2}}.$$ 
	\end{thm}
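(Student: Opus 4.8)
The plan is to strip off all but $q/2$ of the variables, reduce to producing a single zero with distinct coordinates, and then pass to a purely combinatorial question about $(q/2)$-element subsets of $\f{q}$, which is the real content. First I would reduce to exactly $q/2$ variables: fixing the last $k-q/2$ coordinates to $\vec c=(c_{q/2+1},\dots,c_k)\in\f{q}^{\,k-q/2}$ and setting $h_{\vec c}(x_1,\dots,x_{q/2}):=h(x_1,\dots,x_{q/2},\vec c)$, the addition rule $h_e(x,\vec c)=\sum_{i=0}^e h_i(x_1,\dots,x_{q/2})h_{e-i}(\vec c)$ gives $h_{\vec c}=\sum_{i=0}^m b_i h_i(x_1,\dots,x_{q/2})$ with $b_i=\sum_{e=i}^m a_e h_{e-i}(\vec c)$ and $b_m=a_m\neq0$; so each $h_{\vec c}$ is again a complete symmetric polynomial of degree $m$, now in $q/2$ variables. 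Since $N_q(h)=\sum_{\vec c}N_q(h_{\vec c})$, it suffices to show that for $q\ge8$ even, every complete symmetric polynomial $h'$ of degree $m$ with $1\le m\le q/2$ in $q/2$ variables over $\f{q}$ has $N_q(h')\ge(q/2)!$.

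Here I would use symmetry: if $h'$ has one zero $(y_1,\dots,y_{q/2})$ with pairwise distinct entries, then its $(q/2)!$ coordinate permutations are distinct zeros, so $N_q(h')\ge(q/2)!$; thus it is enough to find one $(q/2)$-subset $Y\subseteq\f{q}$ on whose elements $h'$ vanishes. Now comes the key reduction: writing $h'=\sum_{e=0}^m a_e h_e$ and $Z:=\f{q}\setminus Y$ (also of size $q/2$), the identity $\prod_{y\in Y}(1-yt)\prod_{z\in Z}(1-zt)=\prod_{a\in\f{q}}(1-at)=1-t^{q-1}$ together with $m\le q/2<q-1$ turns $\sum_e h_e(Y)t^e=\prod_{y\in Y}(1-yt)^{-1}$ into $h_e(Y)=(-1)^ee_e(Z)$ for $0\le e\le m$, with $e_e$ the elementary symmetric polynomial. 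Hence, comparing coefficients in $F_Z(T)=\prod_{z\in Z}(T-z)=\sum_j(-1)^je_j(Z)T^{q/2-j}$,
$$h'(Y)=\sum_{e=0}^m(-1)^ea_e\,e_e(Z)=\big[T^{q/2}\big]\!\big(A(T)F_Z(T)\big),\qquad A(T):=\sum_{e=0}^m a_eT^e,\ \ \deg A=m.$$
So everything comes down to: for every $A\in\f{q}[T]$ of degree $m\le q/2$ there is a $(q/2)$-subset $Z\subseteq\f{q}$ with $[T^{q/2}](A(T)F_Z(T))=0$; and since replacing $A$ by $A-\tau$ leaves the degree (hence this class of problems) unchanged while shifting the value by $\tau$, this is equivalent to surjectivity of the map $Z\mapsto[T^{q/2}](A(T)F_Z(T))=\sum_e(-1)^ea_e e_e(Z)$ from $(q/2)$-subsets of $\f{q}$ to $\f{q}$.

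I would prove this surjectivity claim — stated more generally for $r$-subsets and $\deg A=d\le r\le q-1$ — by induction on $d$, via single-element swaps. If $Z'=(Z\setminus\{u\})\cup\{v\}$ with $u\in Z$, $v\notin Z$, then $F_{Z'}=F_Z(T-v)/(T-u)$ and $[T^{q/2}](AF_{Z'})-[T^{q/2}](AF_Z)=(u-v)\,[T^{q/2}](A\,F_{Z\setminus\{u\}})$; equivalently, writing $Z=\{u\}\cup Z_1$ one gets the clean decomposition $[T^{q/2}](AF_Z)=\psi_0(Z_1)+u\,\psi_1(Z_1)$, where $\psi_1(Z_1)=\sum_{e=1}^m a_e e_{e-1}(Z_1)$ is the functional attached to a polynomial of degree $d-1$ with the same nonzero leading coefficient $a_m$. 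By the inductive hypothesis $\psi_1$ is already surjective on $(q/2-1)$-subsets; fixing $Z_1$ with $\psi_1(Z_1)\neq0$ and letting $u$ run over $\f{q}\setminus Z_1$ makes $\psi_0(Z_1)+u\psi_1(Z_1)$ cover $q-(q/2-1)$ of the $q$ target values, and varying $Z_1$ over the abundant family of subsets with $\psi_1\neq0$ is meant to push the union onto all of $\f{q}$. The base case $d=1$ is the assertion that subset sums $\{\sum_{z\in Z}z:|Z|=r\}$ equal $\f{q}$ for all $1\le r\le q-1$; this is exactly what fails for $q=4$ (the $2$-subsets of $\f{4}$ sum only to nonzero elements), and this is where the hypothesis $q\ge8$ enters.

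The hard part is precisely this last step, and two things need genuine care. First, the bookkeeping in the inductive step: the element $u$ forced by a given target value must land outside $Z_1$, so one must control the joint behaviour of $(\psi_0,\psi_1)$ on subsets, not merely the individual surjectivity of each, or else combine several $Z_1$'s; the abundance of subsets when $q\ge8$ is what should make this work, but it is not automatic. Second, the small-$q$ base cases of the subset-sum statement must be checked by hand. An alternative route to the main step, bypassing the induction bookkeeping, is an additive-character estimate: $\#\{Z:[T^{q/2}](AF_Z)=\tau\}=q^{-1}\sum_\chi\bar\chi(\tau)\sum_{|Z|=q/2}\chi\big([T^{q/2}](AF_Z)\big)$, bounding the nontrivial-$\chi$ sums below the main term $\binom{q}{q/2}/q$; the snag there is that the associated full character sum involves $h_m$, whose singular locus is exactly what the introduction flags as unknown, so one must use the distinct-coordinate restriction to circumvent it. Either way, the reductions are routine and the entire fight is in showing that the degree-$m$ elementary-symmetric functional does not avoid a value on $(q/2)$-subsets of $\f{q}$ once $q\ge8$.
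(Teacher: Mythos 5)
Your reductions are all correct and run parallel to the paper's: the specialization of the last $k-q/2$ variables (the same identity $h_m(x_1,\dots,x_k)=\sum_e h_e(x_1,\dots,x_{q/2})\,h_{m-e}(x_{q/2+1},\dots,x_k)$ that the paper uses), the factor $(q/2)!$ from permuting one distinct-coordinate zero, and the duality $h_e(Y)=(-1)^e e_e(\f{q}\setminus Y)$, which is the dual Reed--Solomon code of Section 3.2 in combinatorial clothing. They correctly reduce the theorem to the claim that for every $A\in\f{q}[T]$ of degree $m$ with $1\le m\le q/2$ some $(q/2)$-subset $Z\subseteq\f{q}$ satisfies $\sum_e(-1)^ea_e e_e(Z)=0$. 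But that claim \emph{is} the theorem: the paper derives it from Theorem~\ref{thm4} with $k=q/2=\lfloor(q+1)/2\rfloor$, whose proof in that range rests on the Seroussi--Roth classification of MDS one-step extensions of Reed--Solomon codes (Lemma~\ref{ExtofNRC}) combined with the linear-algebra argument of Section 3.3. Your proposal replaces this genuine finite-geometry input with an induction that does not close.

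Two concrete problems. First, the inductive step is incomplete, as you concede: once $\psi_1(Z_1)\neq0$, the element $u$ needed to reach a given target is \emph{uniquely} determined by $Z_1$, so you must exhibit some admissible $Z_1$ for which this forced $u$ lies outside $Z_1$; ``abundance of subsets'' is not an argument, and controlling the joint behaviour of $(\psi_0(Z_1),\psi_1(Z_1))$ is essentially as hard as the original surjectivity. Second, the statement you propose to induct on --- surjectivity for all $r$-subsets whenever $\deg A=d\le r\le q-1$ --- is false for even $q$: a hyperoval o-polynomial $f$ of degree $6$ (which exists for $q=2^h$ with $h\ge 3$ odd) has $D_f$ nowhere zero on distinct triples, and under your duality this yields an $A$ of degree $4\le r=q-3$ whose functional omits the value $0$ on $(q-3)$-subsets. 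Your particular induction chain keeps $r-d=q/2-m$ fixed and so stays at $r\le q/2$, which this example does not reach, but it shows the induction cannot be run in the generality you state and that some structural hypothesis is doing real work. The character-sum alternative is likewise only a sketch. The honest fix is to quote Theorem~\ref{thm4} (ultimately Lemma~\ref{ExtofNRC}) at this point, exactly as the paper does.
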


%It would also be interesting to ask if a similar theorem  holds for 
%some other family of symmetric polynomials in $k$ variables of degree $m$, such as 
%the elementary symmetric polynomials.  

The paper is organized as follows. In Section~\ref{distinctcoordinates}, 
we will consider the stronger question on the number of $\f{q}$-rational 
points with distinct coordinates.  In Section~\ref{proofoftheorem2.5}, we relate the complete symmetric 
polynomial $h(x_1,..., x_k)$ to the determinant of certain generalized 
Vandermonde determinant. It is shown that the existence of $\f{q}$-rational points with distinct coordinates is equivalent to 
the vanishing of certain generalized Vandermonde determinant.  This latter problem is further reduced to 
the classification of deep holes for Reed-Solomon codes, equivalently possible MDS extension of Reed-Solomon codes.  
Theorem \ref{thm1} then follows by applying the classical result $(k=3$, $p$ odd) of Segre \cite{Seg55}  on ovals in finite projective planes. 
In section $4$, we consider the harder problem of solutions of $h(x_1,..., x_k)$ 
where the variables $x_i$'s only varies in a subset $S$ of $\f{q}$.

Segre's old result  is now a special case of the Cheng-Murray conjecture \cite{CM07} which classifies deep holes for Reed-Solomon codes. The latter is in turn a consequence of the normal rational curve conjecture in finite geometry. 
The Cheng-Murray conjecture remains open in general. But it has been proved by Zhuang-Cheng-Li \cite{ZCL16} 
in the case $k\leq p$  and later by Kaipa \cite{Kaipa17} in the case $k\geq [(q+1)/2]$.  
These recent works will give us additional results on the number of rational points with distinct coordinates, see the second part of section $3$. 
To be self-contained, we also include a simpler and more direct 
proof of these more general results.

\section{Rational points with distinct coordinates}\label{distinctcoordinates}
In coding theory, one often requires 
the additional condition that the coordinates of the rational point are distinct.

\begin{nota} 
	Let $h(x_1,..., x_k)$ be a complete symmetric polynomial of degree $m$ in $k$-variables over $\f{q}$. Let
	$$N_q^*(h):=\# \{(x_1,..., x_k)\in \f{q}^k\,|\, h(x_1,..., x_k)=0\,\mbox{and}\, ~
	x_i\not= x_j ~
	 \forall\  i\not=j\}$$
	denote the number of $\f{q}$-rational points on the 
	affine hypersurface defined by $h(x_1,..., x_k)=0$  
	with the additional condition that the coordinates are distinct. 
\end{nota}

We are interested in when $N_q^*(h)\geq 1$. Since $T$ is symmetric, 
$N_q^*(h)\geq 1$ is equivalent to $N_q^*(h)\geq k!$ by permutations of the solutions. 
Our main problem is the following

%The degree restriction $m\leq q-3$ in the theorem can be weakened. We have  

\begin{conj}\label{conj:zeroofcompsymmpoly}  
	Let $3\leq k\leq q-2$ ($4\leq k\leq q-3$ if $q$ is even). Let 
	$$h(x_1,..., x_k)=\sum_{e=0}^m a_eh_e(x_1,..., x_n)\in \f{q}[x_1,..., x_k]$$
	be a complete symmetric polynomial of positive 
	degree $m$.  Then $N_q^*(h)\geq 1$ if and only if the reduction $x^{k-1}\left(\sum_{e=0}^m a_ex^{e}\right)$ 
	modulo $(x^q-x)$ is not a polynomial of degree equal to $k-1$. \end{conj}

As a consequence, if the condition of the conjecture holds, then we would have 
$$N_q(h)\geq N_q^*(h)\geq k!.$$ 
 
We shall see that the reduction condition modulo $(x^q-x)$ is necessary in order for  $N_q^*(h)\geq 1$. The difficulty lies 
in the sufficient part of the condition.  The reduction condition is also simple to check. For $0\leq j\leq q-2$, let 
$$b_j = \sum_{e\equiv j \bmod {(q-1)}} a_e.$$
Then, 
$$x^{k-1}\left(\sum_{e=0}^m a_ex^{e}\right)\equiv \sum_{j=0}^{q-k} b_j x^{j+k-1} 
+ \sum_{j=q-k+1}^{q-2}b_j x^{j+k-q}  \mod (x^q-x),$$
where the second sum on the right is a polynomial of degree at most $k-2$.  
Thus, the reduction	is not a polynomial of degree equal to $k-1$ 
	if and only if either $b_{0} =0$ or that $b_j$ is not zero 
	for some $1\leq j\leq q-k$. As an example, if $1\leq m\leq q-k$, then $g(x)=x^{k-1}\left(\sum_{e=0}^m a_ex^{e}\right)$ 
is a polynomial of positive degree $m+k-1\leq q-1$, and so its reduction 
modulo $(x^q-x)$ has the same positive degree $m+k-1$ which is not equal to $k-1$.  
A special case of the above conjecture is then the following 

\begin{conj}\label{cor:zeroofcompsymmpoly}  
	Let $3\leq k\leq q-2$ ($4\leq k\leq q-3$ if $q$ is even). Let 
	$$h(x_1,..., x_k)=\sum_{e=0}^m a_eh_e(x_1,..., x_n)\in \f{q}[x_1,..., x_k]$$
	be a complete symmetric polynomial of positive 
	degree $m$.  If $1\leq m\leq q-k$,	 then 
	$N_q(h)\geq N_q^*(h)\geq k!$. 
\end{conj}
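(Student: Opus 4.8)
The plan is to convert the existence of a distinct-coordinate zero of $h$ into a statement about deep holes of Reed--Solomon codes, as foreshadowed in the introduction, and then appeal to the known cases of the relevant classification. The first step is the bialternant (Jacobi--Trudi) identity for the one-row Schur function $h_e=s_{(e,0,\dots,0)}$: for pairwise distinct $x_1,\dots,x_k$,
$$h_e(x_1,\dots,x_k)\cdot\prod_{1\le i<j\le k}(x_j-x_i)\;=\;\det\Big(\,x_i^{e+k-1}\ \big|\ x_i^{k-2}\ \big|\ \cdots\ \big|\ x_i\ \big|\ 1\,\Big)_{1\le i\le k}.$$
Multiplying by $a_e$ and summing over $e$, and writing $f(x):=\sum_{e=0}^m a_ex^e$ and $g(x):=x^{k-1}f(x)$, multilinearity turns the first column into $g(x_i)$, so $h(x_1,\dots,x_k)\cdot\prod_{i<j}(x_j-x_i)=\det\big(g(x_i)\mid x_i^{k-2}\mid\cdots\mid 1\big)$. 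Since the Vandermonde factor is nonzero on distinct coordinates, for distinct $x_1,\dots,x_k$ we have $h(x_1,\dots,x_k)=0$ if and only if the vectors $(g(x_i),x_i^{k-2},\dots,x_i,1)$, $i=1,\dots,k$, are $\f{q}$-linearly dependent.

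Next I would read this as a statement about a $[q,k]$ code. The $q$ columns $(g(\alpha),\alpha^{k-2},\dots,\alpha,1)^{\mathsf T}$, $\alpha\in\f{q}$, generate a code $C_g$, and a distinct-coordinate zero of $h$ is precisely a dependent set of $k$ of these columns; hence $N_q^*(h)\ge 1$ iff $C_g$ is not MDS, iff the $q$ points $[\,1:\alpha:\cdots:\alpha^{k-2}:g(\alpha)\,]$ of $\mathrm{PG}(k-1,q)$ fail to form a $q$-arc, iff the word $(g(\alpha))_{\alpha\in\f{q}}$ is \emph{not} a deep hole of the Reed--Solomon code $\mathrm{RS}$ of dimension $k-1$ (evaluations of polynomials of degree $\le k-2$ over $\f{q}$). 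Because $h$ is symmetric, $N_q^*(h)\ge 1$ upgrades to $N_q^*(h)\ge k!$ by permuting the coordinates of one solution, and $N_q(h)\ge N_q^*(h)$ trivially; so everything reduces to showing that $(g(\alpha))_\alpha$ is not a deep hole of $\mathrm{RS}$.

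Here is where the hypothesis $1\le m\le q-k$ enters: it forces $k\le\deg g=m+k-1\le q-1$, so $g$ equals its own reduction modulo $x^q-x$ and has degree $m+k-1>k-1$. In particular the word $(g(\alpha))_\alpha$ is not one of the standard deep holes of $\mathrm{RS}$, which are exactly the words represented by polynomials of degree exactly $k-1$. It remains to exclude the possibility that $g$ produces one of the non-standard, high-degree deep holes; the very special form $g(x)=x^{k-1}f(x)$ — in particular $x^{k-1}\mid g$, so $g$ vanishes to order $\ge k-1\ge 2$ at $0$ — should rule this out after a short computation (for instance, it is not projectively equivalent to the $x^{q-2}$-type words). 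Invoking the classification of deep holes of Reed--Solomon codes — the Cheng--Murray conjecture, itself a consequence of the normal rational curve conjecture — which is a theorem when $k-1=2$ and $p$ is odd (Segre \cite{Seg55}), when $k\le p$ (Zhuang--Cheng--Li \cite{ZCL16}), and when $k\ge\lceil(q+1)/2\rceil$ (Kaipa \cite{Kaipa17}), then completes the argument in each of those ranges. The case $k=3$, $p$ odd is precisely the input to Theorem \ref{thm1}: one fixes $x_4,\dots,x_k$ arbitrarily (each of the $q^{k-3}$ choices leaves a complete symmetric polynomial in $x_1,x_2,x_3$ of the same degree $m\le q-3$, since the top coefficient $a_m$ is unchanged), applies the $k=3$ case to get $3!=6$ distinct-coordinate zeros per choice, and sums.

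The essential obstacle is this last step: the classification of deep holes — equivalently the MDS conjecture in the pertinent range — is open in general, which is why the statement is posed here as a conjecture rather than a theorem, and becomes unconditional exactly where that classification is available. A secondary but genuine point is the projective-equivalence verification that excludes polynomials divisible by $x^{k-1}$ from the list of exceptional deep holes, together with a separate treatment of the boundary degree $m=q-k$ (where the word has degree $q-1$ and the crude device ``a polynomial of degree less than $q$ with $q$ roots is zero'' does not apply directly).
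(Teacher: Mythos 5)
Your proposal is correct and follows the paper's own route: Fink's bialternant identity reduces distinct-coordinate zeros of $h$ to the vanishing of the generalized Vandermonde determinant $D_g$ with $g=x^{k-1}\sum_e a_e x^e$, the hypothesis $1\le m\le q-k$ makes $\deg g=m+k-1\le q-1$ already reduced modulo $x^q-x$ and different from $k-1$, and everything then rests on the Cheng--Murray deep-hole classification --- which is exactly why the statement is posed as a conjecture and is established in the paper (Theorem~\ref{thm4}) only for $k\le p$ or $k\ge\lfloor(q+1)/2\rfloor$. Your two closing caveats are unnecessary: in those ranges the classification over the full field $\f{q}$ admits no exceptional deep holes at all (the $x^{q-2}$-type words only arise for proper subsets $S\subsetneq\f{q}$, as in Section~4), and the boundary case $m=q-k$ causes no trouble since $\deg g=q-1$ still permits the interpolation argument.
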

\begin{rem}
The simple condition $3\leq k\leq q-2$ cannot be improved. Since for $k=1$, $h(x)$ 
can be an arbitrary univariate polynomial of degree $m\geq 2$ 
and one can easily find one such $h(x)$ (any irreducible $h(x)$ will do) 
such that $N_q^*(h)=0$. If $k=2$, take $f(x)=\sum_{e=1}^{m+1} a_ex^e$ to be a 
permutation polynomial of degree $m+1$ over $\f{q}$, then $h(x_1, x_2): = (f(x_1)-f(x_2))/(x_1-x_2)$ 
is a complete symmetric polynomial of degree $m$ with no $\f{q}$-rational points off the diagonal $x_1=x_2$. 
The condition $k\leq q-2$ is optimal too. 
For instance, if $k=q$, there is only one possibility (up to permutation) for solutions with distinct coordinates. One can easily modify the 
constant term of $h$ so that $N_q^*(h)=0$. If $k=q-1$ ($q>3$ odd), a solution set 
$\{\alpha_1,\cdots, \alpha_{q-1}\}$ is equal to $\f{q}-\{\alpha\}$ for some $\alpha \in \f{q}$, 
one then checks that  
$$\sum_{i=1}^{q-1} \alpha_i = -\alpha,\ \sum_{i=1}^{q-1} \alpha_i^2 =-\alpha^2.$$
Then,
$$2h_2(\alpha_1,\cdots, \alpha_{q-1}) = (\sum_{i=1}^{q-1}\alpha_i)^2 +\sum_{i=1}^{q-1}\alpha_i^2 = \alpha^2 -\alpha^2 =0$$
for all distinct $\alpha_1,\cdots, \alpha_{q-1}$ in $\f{q}$. As a consequence, 
the complete symmetric polynomial $2h_2(x_1,..., x_{q-1}) +c$ 
($c\in \f{q}^*$) has no $\f{q}$-rational points with distinct coordinates. 
 	
\end{rem} 

The aim of this paper is to prove that the above conjecture is true 
if either $k\leq p$ (this is always satisfied if $q=p$ is a prime) or if $k\geq \lfloor (q+1)/2\rfloor$. 

\begin{thm}\label{thm4}
	 Let $3\leq k\leq q-2$ ($4\leq k\leq q-3$ if $q$ is even). 
	Let $$h(x_1,..., x_k)=\sum_{e=0}^m a_eh_e(x_1,..., x_n)\in \f{q}[x_1,..., x_k]$$
	be a complete symmetric polynomial of positive 
	degree $m$.  Assume either 
	$k\leq p$  or $k\geq  \lfloor (q+1)/2\rfloor$.  Then $N_q^*(h)\geq 1$ if and only if the reduction $x^{k-1}\left(\sum_{e=0}^m a_ex^{e}\right)$ 
	modulo $(x^q-x)$ is not a polynomial of degree equal to $k-1$.\end{thm}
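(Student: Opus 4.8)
The plan is to reduce Theorem~\ref{thm4} to a statement about deep holes of Reed--Solomon codes (equivalently, about normal rational curves / MDS extensions) and then invoke the known cases due to Zhuang--Cheng--Li for $k \le p$ and Kaipa for $k \ge \lfloor (q+1)/2 \rfloor$. The first block of work is the \emph{necessity} direction, which should be elementary: writing $f(x) = \sum_{e=0}^m a_e x^e$, a solution $(\alpha_1,\dots,\alpha_k)$ with distinct coordinates corresponds, via the identity relating $h_m$ to a generalized Vandermonde determinant (to be established in Section~\ref{proofoftheorem2.5}), to a vanishing condition $\det V = 0$ for the matrix with rows indexed by the $\alpha_i$ and columns the exponents $0,1,\dots,k-2,$ together with the ``extra'' column coming from $g(x) = x^{k-1}f(x) \bmod (x^q-x)$. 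If the reduction of $g$ \emph{were} exactly of degree $k-1$, that extra column would be (a scalar times) the monomial column $x^{k-1}$, the determinant would be an honest Vandermonde determinant in $k$ distinct points, hence nonzero, contradiction. So I would carry out: (i) recall the Vandermonde identity from Section~\ref{proofoftheorem2.5}; (ii) run this degree bookkeeping; (iii) conclude necessity.

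For \emph{sufficiency}, the key step is the dictionary: $N_q^*(h) \ge 1$ fails precisely when the function $\bar g$ on $\f q$ (the reduction of $g$ modulo $x^q - x$, viewed as a map $\f q \to \f q$) is a \emph{deep hole} of the Reed--Solomon code $\mathrm{RS}_q[q, k-1]$ of length $q$ and dimension $k-1$ — or rather, when there is no way to select $k$ distinct evaluation points $\alpha_1,\dots,\alpha_k$ at which the appropriate $k \times k$ generalized Vandermonde vanishes. I would phrase it as: there exist distinct $\alpha_1,\dots,\alpha_k \in \f q$ with $h(\alpha_1,\dots,\alpha_k) = 0$ if and only if the vector $(\bar g(\beta))_{\beta \in \f q}$, after the standard identification, is \emph{not} at the maximal distance from $\mathrm{RS}_q[q,k-1]$; and the points $\beta$ \emph{not} chosen among the $\alpha_i$ are exactly where agreement occurs. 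Concretely this is the statement that $g$ is \emph{not} a deep hole iff one can interpolate $g$ by a polynomial of degree $\le k-1$ on $q - k$ of the points of $\f q$, i.e. iff $g$ agrees off at most $k$ points with the Reed--Solomon codeword; the algebraic translation of ``agrees off $\le k$ points'' is precisely a vanishing generalized Vandermonde of size $k$, which is precisely $N_q^*(h)\ge 1$. The content here is bookkeeping plus citing the earlier sections.

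The substantive input is then the classification of deep holes for $\mathrm{RS}_q[q, k-1]$ in the two regimes. After the above reduction, the statement ``$\bar g$ is a deep hole $\iff$ $\bar g$ has degree exactly $k-1$'' is exactly (the relevant case of) the Cheng--Murray conjecture for \emph{full-length} Reed--Solomon codes, which is the MDS-extension / normal rational curve conjecture; this is a theorem of Zhuang--Cheng--Li~\cite{ZCL16} when $k \le p$ and of Kaipa~\cite{Kaipa17} when $k \ge \lfloor (q+1)/2 \rfloor$. I would state the precise form of their results I need, check that the hypotheses $3 \le k \le q-2$ (and $4 \le k \le q-3$ for even $q$) are compatible with those references' hypotheses — in particular that the parity restriction in the even case is exactly what is needed to stay inside the range where the classification is known and has the clean ``degree $= k-1$'' description — and then combine with the dictionary. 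As promised in the excerpt, I would also sketch the self-contained argument: reduce to the Segre-type setup of a set of $q - k$ points in $\mathrm{PG}(k-2, q)$ lying on a normal rational curve, and argue directly rather than through the full strength of the literature.

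The main obstacle I expect is \emph{not} any single deep fact but the careful transfer between the three languages: (a) distinct-coordinate solutions of $h$, (b) vanishing of a $k\times k$ generalized Vandermonde determinant whose last column encodes $\bar g$, and (c) deep holes / MDS extensions of $\mathrm{RS}_q[q,k-1]$. In particular the edge behaviour at the top of the degree range — when $g$ has degree between $q-k+1$ and $q-1$ and wraps around under reduction mod $x^q-x$, producing the two-sum expression displayed before Conjecture~\ref{cor:zeroofcompsymmpoly} — needs to be handled so that ``reduction has degree exactly $k-1$'' lines up on the nose with ``$\bar g$ is a deep hole.'' Getting the even-$q$ range restrictions and the $k=\lfloor(q+1)/2\rfloor$ boundary case to match the cited theorems verbatim is where the care is required.
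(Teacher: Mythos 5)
Your proposal is correct and follows essentially the same route as the paper: pass from distinct-coordinate zeros of $h$ to vanishing of the generalized Vandermonde determinant via the identity $D_f=C_f\prod_{i<j}(x_j-x_i)$, dispose of necessity by the degree-$(k-1)$ row reduction, and obtain sufficiency from the deep-hole/MDS-extension classification of Zhuang--Cheng--Li ($k\leq p$) and Kaipa ($k\geq\lfloor(q+1)/2\rfloor$), exactly as the paper does before its optional self-contained argument. The only difference is that the paper's self-contained version proceeds through the dual code $M^E_f(q,k-1)^\perp$ and the Seroussi--Roth and Ball theorems on MDS extensions rather than the Segre-type normal-rational-curve sketch you outline, but this does not affect the validity of your plan.
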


By using Theorem~\ref{thm4}, we now give the proofs of Theorems~\ref{thm1} and~\ref{thm3}.
\begin{flushleft}
	\textbf{Proofs of Theorems~\ref{thm1} and~\ref{thm3}.} 
	By definition, one checks that 
	$$h_m(x_1,..., x_k) = \sum_{e=0}^m h_e(x_1, x_2, x_3) h_{m-e}(x_4, \cdots, x_k).$$
	It follows that if $h(x_1,..., x_k)$ is a complete symmetric polynomial in $k$-variables of degree $m$, then 
	for every choice of $(a_4,\cdots, a_k) \in \f{q}^{k-3}$, the specialization $h(x_1, x_2, x_3, a_4, \cdots, a_k)$  
	is a complete symmetric polynomial of the same degree $m$ in the $3$ variable $\{x_1, x_2, x_3\}$. We apply the case 
	$k=3$ of the above theorem which is true when $q$ is odd and $1\leq m \leq q-3$.  This proves 
	that 
	$$N_q(h) \geq \sum_{a_4,\cdots, a_k \in \f{q}} N_q^*(h(x_1, x_2, x_3, a_4, \cdots, a_k)) \geq 6q^{k-3}.$$
	Note that for $q$ even, the same argument shows that the case $k=4$ of the above conjecture implies 
	$$N_q(h)\geq 24q^{k-4}$$
	if both $k\geq 4$ and $1\leq m \leq q-4$. But the case $k=4$ ($q$ even) of the above conjecture is still open.  For $q$ even, we use the case $k=[(q+1)/2]=q/2$ of the above 
	theorem to deduce the weaker Theorem \ref{thm3}. 
\end{flushleft}

%\section{Related to a finite geometry problem}
\section{The proof of Theorem~\ref{thm4}}\label{proofoftheorem2.5}
In this section, we give a proof of Theorem~\ref{thm4}. We first translate Theorem~\ref{thm4} to a vanishing problem of a generalized Vandermonde determinant. The latter is further reduced to the classification problem of deep holes for Reed-Solomon codes. We can then apply results from coding theory and finite geometry. 

\subsection{Generalized Vandermonde determinant}
Let $k$ be an integer such that $2\leq k\leq q$. For any polynomial $f(x)\in \f{q}[x]$ let $M_f$ denote the $k\times k$ matrix of polynomials
$$	M_f(x_1,\cdots,x_k)=\left(
\begin{array}{cccc}
1 & 1 & \cdots & 1  \\
x_1 & x_2 & \cdots & x_k  \\
\vdots & \vdots & \ddots & \vdots  \\
x_1^{k-2} & x_2^{k-2} & \cdots & x_k^{k-2} \\
f(x_1) & f(x_2) & \cdots & f(x_k )\\
\end{array}
\right). $$
Let 
$$D_f(x_1,\cdots,x_k)=\det 	M_f(x_1,\cdots,x_k)$$
denote its determinant.  An interesting problem is to decide when the determinant $D_f(\alpha_1,\cdots,\alpha_k)$ is non-zero for all pairwise distinct $\alpha_1,\cdots,\alpha_k\in \f{q}$. Since $\alpha^q=\alpha$ for all $\alpha\in \f{q}$, reducing 
$f(x)$ modulo $(x^q-x)$ if necessary, we can assume that $\deg(f) \leq q-1$. 

It is obvious that $D_f(x_1,\cdots,x_k)$ is symmetric with respect to the variables $x_1,\cdots, x_k$. Indeed, there is an explicit formula for the determinant $D_f$ in terms of complete symmetric polynomials.  	

\begin{prop}[\cite{Fink73}]\label{fink}
	For any polynomial $f(x)=\sum_{i=0}^{d}a_ix^i\in\f{q}[x]$ with $a_d\neq 0$, define 
	$$C_f(x_1,\cdots, x_k):=\sum_{i=k-1}^{d}a_ih_{i-(k-1)}(x_1,\cdots, x_k).$$
	This is a complete symmetric polynomial of degree $d-k+1$, which depends only on the degree at least $k-1$ part of $f(x)$. 
	Then, we have
	\[
	D_f(x_1,\cdots, x_k)=C_f(x_1,\cdots, x_k)\prod_{1\leq i<j\leq k}(x_j-x_i). 
	\]
\end{prop}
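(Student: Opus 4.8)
The plan is to prove the identity
$$D_f(x_1,\cdots,x_k) = C_f(x_1,\cdots,x_k)\prod_{1\leq i<j\leq k}(x_j-x_i)$$
by treating both sides as polynomials in $\f{q}[x_1,\dots,x_k]$ (indeed in $\mathbb{Z}[a_0,\dots,a_d][x_1,\dots,x_k]$, so the computation is characteristic-free) and exploiting linearity in the coefficients of $f$. First I would note that $D_f$ is additive in $f$: writing $f = \sum_{i=0}^d a_i x^i$, expansion of the determinant along the last row shows $D_f = \sum_{i=0}^d a_i D_{x^i}$, where $D_{x^i}$ is the determinant of the matrix whose last row is $(x_1^i,\dots,x_k^i)$. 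Likewise $C_f = \sum_{i=k-1}^d a_i h_{i-(k-1)}$ is additive. So it suffices to prove the single-monomial identity
$$D_{x^i}(x_1,\cdots,x_k) = h_{i-(k-1)}(x_1,\cdots,x_k)\prod_{1\leq i'<j'\leq k}(x_{j'}-x_{i'})$$
for each $i\geq k-1$, together with the vanishing statement $D_{x^i}=0$ for $0\leq i\leq k-2$ (which is immediate, since then the last row $(x_1^i,\dots,x_k^i)$ coincides with an earlier row of the matrix, making the determinant zero — this also explains why $C_f$ depends only on the degree $\geq k-1$ part of $f$).

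For the key monomial case, I would recognize $D_{x^i}$ as a \emph{generalized Vandermonde determinant}: it is $\det(x_j^{\lambda_l})$ where the exponent sequence is $(0,1,\dots,k-2,i)$. The classical theory of Schur polynomials then applies: for a strictly increasing exponent sequence $(\mu_1<\mu_2<\cdots<\mu_k)$, the bialternant formula of Jacobi gives
$$\det\big(x_j^{\mu_l}\big)_{1\le j,l\le k} = s_\lambda(x_1,\dots,x_k)\prod_{1\leq i'<j'\leq k}(x_{j'}-x_{i'}),$$
where $\lambda$ is the partition with $\lambda_l = \mu_l - (l-1)$. For our exponent sequence $(0,1,\dots,k-2,i)$ we get $\lambda = (i-(k-1),0,\dots,0)$, a single-row partition, and $s_{(r)} = h_r$ is exactly the complete homogeneous symmetric polynomial of degree $r$. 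Plugging in $r = i-(k-1)$ yields the desired monomial identity. Summing over $i$ with coefficients $a_i$ recovers the statement, and the degree count $\deg C_f = d-(k-1)$ is clear from $h_{d-(k-1)}$ being the top-degree term.

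The only real subtlety is whether one is allowed to invoke the Schur/bialternant identity over an arbitrary commutative ring (here $\f{q}$, possibly of small characteristic). This is fine: the bialternant formula is a \emph{polynomial identity with integer coefficients}, proved by pure manipulation of determinants (row/column operations and the combinatorial interpretation of Schur functions), so it holds after base change to any ring, in particular $\f{q}$. If one prefers a self-contained argument avoiding Schur functions altogether, the monomial identity can instead be proved directly by induction on $k$: expand $D_{x^i}$ along the last column, use the recursion $h_r(x_1,\dots,x_k) = \sum_{j=0}^{r} x_k^{j} h_{r-j}(x_1,\dots,x_{k-1})$, and match terms — but I expect the cleanest writeup is simply to cite the classical bialternant formula as I have done, since it is exactly the content of the claimed proposition specialized to hook-free single-row partitions. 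I anticipate the main (minor) obstacle is bookkeeping the index shift between the exponent sequence and the partition so that $s_\lambda$ collapses to $h_{i-(k-1)}$; everything else is routine.
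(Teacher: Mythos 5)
Your proof is correct. Note that the paper itself gives no proof of this proposition — it is stated with a citation to Finkelstein's 1973 paper — so any argument you supply is necessarily "your own route." What you propose is the standard and, I think, the right one: multilinearity of the determinant in the last row reduces everything to the monomials $f=x^i$; for $0\leq i\leq k-2$ the last row repeats an earlier row and the determinant vanishes (which is exactly why $C_f$ sees only the degree $\geq k-1$ part of $f$); and for $i\geq k-1$ the generalized Vandermonde determinant with exponent sequence $(0,1,\dots,k-2,i)$ is, by the Jacobi bialternant formula, $s_{\lambda}$ times the ordinary Vandermonde with $\lambda=(i-(k-1),0,\dots,0)$, and $s_{(r)}=h_r$. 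Your remark that the bialternant identity is an integer polynomial identity and therefore valid over $\f{q}$ in any characteristic is exactly the point one needs to make; a quick sanity check in the case $k=2$, where $D_{x^{m+1}}=x_2^{m+1}-x_1^{m+1}=(x_2-x_1)h_m(x_1,x_2)$, confirms that the sign convention matches the paper's ordering $\prod_{1\leq i<j\leq k}(x_j-x_i)$. Two minor bookkeeping points: the degree assertion $\deg C_f=d-k+1$ implicitly requires $d\geq k-1$ (otherwise $C_f=0$), and if you want the writeup fully self-contained you should either cite the bialternant formula precisely or carry out the induction on $k$ you sketch, using $h_r(x_1,\dots,x_k)=\sum_{j=0}^{r}x_k^{j}h_{r-j}(x_1,\dots,x_{k-1})$ after expanding along the last column. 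Neither issue is a gap in the mathematics.
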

\begin{cor}
	For any polynomial $f(x)=\sum_{i=0}^{d}a_ix^i\in\f{q}[x]$ with $a_d\neq 0$, and any pairwise distinct $\alpha_1,\cdots,\alpha_k\in \f{q}$, we have
	\[
	D_f(\alpha_1,\cdots, \alpha_k)= 0\,\,\mbox{if and only if}\,\,C_f(\alpha_1,\cdots, \alpha_k)=0.
	\]
\end{cor}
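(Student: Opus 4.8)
The plan is to deduce this directly from Proposition~\ref{fink}. That proposition gives the factorization
\[
D_f(x_1,\cdots, x_k)=C_f(x_1,\cdots, x_k)\prod_{1\leq i<j\leq k}(x_j-x_i)
\]
as an identity of polynomials in $\f{q}[x_1,\dots,x_k]$, hence it holds after specializing $x_i \mapsto \alpha_i$ for any $\alpha_1,\dots,\alpha_k \in \f{q}$. So first I would substitute the pairwise distinct elements $\alpha_1,\dots,\alpha_k$ into this identity.

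Next I would observe that, since the $\alpha_i$ are pairwise distinct, each factor $\alpha_j-\alpha_i$ with $i<j$ is nonzero, and therefore the product $\prod_{1\leq i<j\leq k}(\alpha_j-\alpha_i)$ is a nonzero element of $\f{q}$. Finally, because $\f{q}$ is a field (in particular an integral domain), multiplication by this nonzero constant is injective, so
\[
D_f(\alpha_1,\cdots,\alpha_k)=C_f(\alpha_1,\cdots,\alpha_k)\prod_{1\leq i<j\leq k}(\alpha_j-\alpha_i)=0
\]
if and only if $C_f(\alpha_1,\cdots,\alpha_k)=0$. This completes the argument.

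There is no real obstacle here: the corollary is a one-line consequence of the factorization in Proposition~\ref{fink} together with the fact that the Vandermonde factor is invertible in $\f{q}$ precisely when the coordinates are distinct. The only mild point to keep in mind is that the factorization is stated for $f$ with a fixed leading term ($a_d \neq 0$), and $C_f$ depends only on the degree-$\geq k-1$ part of $f$; but this is exactly the hypothesis under which Proposition~\ref{fink} is invoked, so nothing further is needed.
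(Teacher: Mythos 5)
Your proof is correct and is exactly the argument the paper intends: the corollary is stated as an immediate consequence of Proposition~\ref{fink}, obtained by specializing the polynomial identity and noting that the Vandermonde factor $\prod_{1\leq i<j\leq k}(\alpha_j-\alpha_i)$ is nonzero for pairwise distinct $\alpha_i$ in the field $\f{q}$. Nothing is missing.
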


Let $N_q^*(C_f)$ denote the number of  $\f{q}$-rational points of $C_f$ with distinct coordinates, 
and let  $N_q^*(D_f)$ denote  the number of $\f{q}$-rational points of $D_f$ with distinct coordinates. 
The above corollary says that 
$$N_q^*(C_f) =N_q^*(D_f)$$
for all polynomial $f(x)\in \f{q}[x]$.  
Conversely, given a complete symmetric polynomial  
$$h(x_1,..., x_k)=\sum_{e=0}^m a_eh_e(x_1,..., x_n)\in \f{q}[x_1,..., x_k],$$
our construction shows that 
$$h(x_1,...x_k) = C_g(x_1,..., x_k), \  g(x) =x^{k-1}\left(\sum_{e=0}^m a_ex^{e}\right).$$
So in this way, counting the $\f{q}$-rational points of a complete symmetric polynomial with distinct coordinates is equivalent to counting the $\f{q}$-rational points of the corresponding determinant $D_f$ with distinct coordinates.

 By the above discussion, Theorem~\ref{thm4} can be translated to the following theorem.
\begin{thm}\label{thm5}
	Let $3\leq k\leq q-2$ ($4\leq k\leq q-3$ if $q$ is even). For any complete symmetric polynomial of positive 
degree $m$
$$h(x_1,..., x_k)=\sum_{e=0}^m a_eh_e(x_1,..., x_n)\in \f{q}[x_1,..., x_k],$$
let $g(x)$ be the reduction of $x^{k-1}\left(\sum_{e=0}^m a_ex^{e}\right)$ modulo $(x^q-x)$. Assume either 
$k\leq p$  or $k\geq  \lfloor (q+1)/2\rfloor$.  Then $N_q^*(D_g)\geq 1$ if and only if $\deg g(x)\neq k-1.$ 
\end{thm}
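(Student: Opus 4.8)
The plan is to recast the statement as a question about deep holes of Reed--Solomon codes, and then invoke the known classification results. Recall that the Reed--Solomon code $\mathrm{RS}_q(k)$ has codewords $(f(\alpha))_{\alpha\in\f{q}}$ for $\deg f\le k-1$, and a received word $u\in\f{q}^q$ is a \emph{deep hole} if its distance to the code equals the covering radius $q-k$. Equivalently, writing $u$ as the evaluation vector of some $g(x)$ with $\deg g\le q-1$, $u$ is a deep hole if and only if for every choice of $k$ pairwise distinct points $\alpha_1,\dots,\alpha_k$, the function $g$ does not agree with any degree $\le k-1$ polynomial on those $k$ points. The bridge to our determinant is the standard observation: $g$ agrees on $\alpha_1,\dots,\alpha_k$ with some polynomial of degree $\le k-1$ precisely when the $(k+1)\times(k+1)$ Vandermonde-type system is degenerate, i.e.\ precisely when $D_g(\alpha_1,\dots,\alpha_k)=0$. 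Hence $N_q^*(D_g)\ge 1$ if and only if the evaluation vector of $g$ is \emph{not} a deep hole of $\mathrm{RS}_q(k)$.

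First I would establish the easy direction. If $\deg g(x)=k-1$, then the evaluation vector of $g$ lies in $\mathrm{RS}_q(k)$ itself up to... wait, no — $\deg g=k-1$ means $g$ \emph{is} a codeword of $\mathrm{RS}_q(k)$, so it agrees with a degree $\le k-1$ polynomial (namely itself) on \emph{every} set of points, forcing $D_g(\alpha_1,\dots,\alpha_k)=0$ for all distinct tuples, hence $N_q^*(D_g)=0$. More relevantly, by Proposition~\ref{fink}, $D_g = C_g\cdot\prod_{i<j}(x_j-x_i)$ and $C_g$ is the complete symmetric polynomial $h$; when $\deg g=k-1$ the degree-$\ge k-1$ part of $g$ is just $a_{k-1}x^{k-1}$, so $C_g = a_{k-1}h_0 = a_{k-1}\neq 0$ is a nonzero constant, giving $D_g(\alpha)\neq 0$ only... —actually this shows $D_g$ never vanishes on distinct tuples, so again $N_q^*(D_g)=0$. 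Either way the ``only if'' direction follows from a direct inspection of Proposition~\ref{fink}, and I would present it via that route since it is self-contained.

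For the ``if'' direction — the substantive one — suppose $\deg g(x)\neq k-1$. Reducing mod $(x^q-x)$ we may assume $\deg g\le q-1$. If $\deg g\le k-2$, then $C_g=0$ identically (the sum in Proposition~\ref{fink} is empty), so $h\equiv 0$ and $N_q^*(D_g)$ counts all distinct tuples, which is positive since $k\le q$; but this case is excluded because $h$ has positive degree $m$, forcing $\deg g = m+k-1\ge k$. So we are in the genuine range $k\le \deg g\le q-1$, and the claim is exactly that the evaluation vector of such a $g$ is not a deep hole of $\mathrm{RS}_q(k)$ — equivalently, the only deep holes of $\mathrm{RS}_q(k)$ arising from polynomials come from $g$ with $\deg g = k-1$ (i.e.\ the ``trivial'' deep holes $u = $ codeword $+ \beta\cdot(\text{evaluation of }x^{k})$... ). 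Here is where I would cite the resolved cases of the Cheng--Murray conjecture: for $k\le p$ this is the theorem of Zhuang--Cheng--Li \cite{ZCL16}, and for $k\ge\lfloor(q+1)/2\rfloor$ it is the theorem of Kaipa \cite{Kaipa17}; the classical case $k=3$, $q$ odd is Segre \cite{Seg55}. Since the excerpt promises a ``simpler and more direct proof,'' I would instead aim for a direct argument: translate non-deep-hole into the existence of an MDS extension of the relevant code / an extension of the normal rational curve, and then in the two regimes use respectively the polynomial-method induction of \cite{ZCL16} (lifting from $k$ to $k+1$ via a counting/derivative argument on $D_g$) and Kaipa's linear-algebra dualization argument reducing $k\ge\lfloor(q+1)/2\rfloor$ to a small-$k$ case. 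The main obstacle is precisely this ``if'' direction: it is equivalent to a genuinely deep theorem in finite geometry, and the two parameter regimes require structurally different techniques, so the bulk of the section will be spent either carefully importing \cite{ZCL16,Kaipa17} with the dictionary above, or reproving them in the streamlined determinant language of $D_g$.
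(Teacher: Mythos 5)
Your reduction to the deep-hole classification for Reed--Solomon codes, and your treatment of the easy direction via Proposition~\ref{fink} ($\deg g=k-1$ gives $C_g=a_{k-1}\neq 0$, so $D_g$ is a nonzero multiple of the Vandermonde determinant and $N_q^*(D_g)=0$), coincide with the paper. For the hard direction, your primary plan --- cite Zhuang--Cheng--Li \cite{ZCL16} for $k\le p$ and Kaipa \cite{Kaipa17} for $k\ge\lfloor(q+1)/2\rfloor$ --- is logically sufficient, and the paper explicitly acknowledges this route. But the proof the paper actually writes is different and self-contained: starting from $N_q^*(D_f)=0$, it appends the column $(0,\dots,0,1)^T$ to form the extended matrix $M^E_f(q,k-1)$, passes to the dual code via MDS duality (Lemma~\ref{dual:MDS}), and invokes Seroussi--Roth \cite{SR86} (for $k\ge\lfloor(q-1)/2\rfloor$) and Ball \cite{Ball12} (for $k\le p$) to conclude that the extension vector is $w=(0,\dots,0,a)^T$ with $a\neq 0$; the resulting linear system~(\ref{non-hom.sol}) then forces $f$ to agree on all of the field with a polynomial of degree at most $k-1$, and $\deg f\le k-2$ is ruled out because it would make $D_f$ identically zero, giving $N_q^*(D_f)>0$. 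This trades the deep-hole theorems for two structural theorems on MDS extensions of Reed--Solomon codes plus elementary linear algebra. Your sketch of ``reproving'' \cite{ZCL16,Kaipa17} is too vague to stand as a proof on its own, so if you do not simply cite those papers you would need to supply something like the paper's duality argument.

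Two small slips to fix. First, your initial pass at the easy direction asserts that $D_g$ vanishing on all distinct tuples would give $N_q^*(D_g)=0$; but $N_q^*$ counts \emph{zeros}, so identically vanishing $D_g$ gives $N_q^*(D_g)>0$ --- your second, Proposition-\ref{fink}-based computation is the correct one and should be the only one kept. Second, the case $\deg g\le k-2$ is not ``excluded'' by $m\ge 1$: since $g$ is the reduction modulo $x^q-x$, its degree can drop below $m+k-1$ when $m+k-1\ge q$. The case is merely harmless, because then $D_g\equiv 0$ and $N_q^*(D_g)\ge 1$ trivially, consistent with the claimed equivalence.
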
 

Note that if $\deg g(x)=k-1$, saying $g(x)=ax^{k-1}+g_1(x)$ for some $a\in \f{q}^*$ and $g_1(x)\in \f{q}[x]$ with $\deg g_1(x)\leq k-2$, then it is easy to see that $D_g=a\prod_{1\leq i<j\leq k}(x_j-x_i). $ So $N_q^*(D_g)=0.$ This proves one direction of Theorem~\ref{thm5} (and Conjecture~\ref{conj:zeroofcompsymmpoly}). Next, we focus on the other direction of Theorem~\ref{thm5}.
\subsection{Reed-Solomon codes and their MDS extensions}
In this subsection, we further reduce Theorem~\ref{thm5} in the previous subsection to the classification of deep holes for Reed-Solomon codes, 
equivalently MDS extension of Reed-Solomon codes.  

Suppose the finite field $\f{q}=\{\alpha_1,\alpha_2,\cdots, \alpha_q\}$. For any integer $2\leq k\leq q$, the $[q, k-1]$ Reed-Solomon code $RS_q(k-1)$ over finite field $\f{q}$ is defined to be the $\f{q}$-vector space generated by rows of the $(k-1)\times q$ Vandermonde matrix 
$$	M(q,k-1)=\left(
\begin{array}{cccc}
1 & 1 & \cdots & 1  \\
\alpha_1 & \alpha_2 & \cdots & \alpha_q  \\
\vdots & \vdots & \ddots & \vdots  \\
\alpha_1^{k-2} & \alpha_2^{k-2} & \cdots & \alpha_q^{k-2} \\
\end{array}
\right).$$
It is an MDS code, which is equivalent to saying that 
every $(k-1)\times (k-1)$ submatrix of $M_q(k-1)$ has non-zero determinant.   

By Lagrange interpolation, any 
vector $\beta =(\beta_1,\cdots, \beta_q) \in \f{q}^q$ can be written uniquely as 
$$\beta = \beta_f:= (f(\alpha_1), \cdots, f(\alpha_q)),$$where 
$f(x)\in \f{q}[x]$ is a polynomial with $\deg(f)\leq q-1$. The word $\beta_f$ is a deep hole 
of the above Reed-Solomon code if and only if the row vectors of 
the following generalized $k\times q$ Vandermonde matrix 
$$	M_{f}(q,k-1)=\left(
\begin{array}{cccc}
1 & 1 & \cdots & 1  \\
\alpha_1 & \alpha_2 & \cdots & \alpha_q  \\
\vdots & \vdots & \ddots & \vdots  \\
\alpha_1^{k-2} & \alpha_2^{k-2} & \cdots & \alpha_q^{k-2} \\
f(\alpha_1) & f(\alpha_2) & \cdots & f(\alpha_q )\\
\end{array}
\right)$$
generate an MDS code, that is, every $k\times k$ submatrix of $M_f(q,k-1)$ has non-zero 
determinant. Equivalently, the determinant 
$$D_f(\alpha_{i_1},\cdots, \alpha_{i_k})=\det \left(
\begin{array}{cccc}
1 & 1 & \cdots & 1  \\
\alpha_{i_1} & \alpha_{i_2} & \cdots & \alpha_{i_k} \\
\vdots & \vdots & \ddots & \vdots  \\
\alpha_{i_1}^{k-2} & \alpha_{i_2}^{k-2} & \cdots & \alpha_{i_k}^{k-2} \\
f(\alpha_{i_1}) & f(\alpha_{i_2}) & \cdots & f(\alpha_{i_k} )\\
\end{array}
\right) \not= 0$$ 
for all $1\leq i_1<\cdots < i_k\leq q$. In this way, Theorem~\ref{thm5} is reduced to the degree classification of deep holes for Reed-Solomon codes.
Cheng-Murray \cite{CM07} conjectured that $\beta_f$ is a deep hole if and only if $\deg(f)=k-1$. 
This conjecture immediately implies (and in fact is equivalent) to Conjecture \ref{conj:zeroofcompsymmpoly}. 
This conjecture was already proven in the case $k=3\leq p$ for odd $q>5$ by Segre in his classical 
paper \cite{Seg55}. This special case is all we need to prove  Theorem \ref{thm1}. 

The Cheng-Murray Conjecture remains open in general, but has been proved by Zhuang-Cheng-Li \cite{ZCL16} 
in the case $k\leq p$  and later by Kaipa \cite{Kaipa17} in the case $k\geq [(q+1)/2]$.  As a consequence, 
Conjecture \ref{conj:zeroofcompsymmpoly}  is true if either $k\leq p$ or $k\geq [(q+1)/2]$.

%The results in \cite{ZCL16} and \cite{Kaipa17} depend crucially on results from finite geometry. 
To be self-contained, in the rest of this section, we include a simpler and more direct 
proof of these results motivated by the approach from \cite{Kaipa17}.  

It is well-known that the dual of a Reed-Solomon code is still a Reed-Solomon code. That is,
\begin{align}\label{dual:Vander}
M(q,k-1)M(q,q+1-k)^T=0.
\end{align} 
We present a proof to make it self-contained. For any polynomial $a(x)\in\f{q}[x]$ of degree $\leq k-2$ and any polynomial $b(x)\in\f{q}[x]$ of degree $\leq q-k,$ the product $a(x)b(x)$ has degree $\leq q-2$. By the Lagrange interpolation, we have
\begin{align*}
a(x)b(x)=\sum_{i=1}^{q}\frac{\prod_{j\neq i}(x-\alpha_j)}{\prod_{j\neq i}(\alpha_i-\alpha_j)}a(\alpha_i)b(\alpha_i)=\sum_{i=1}^{q}a(\alpha_i)b(\alpha_i){\prod_{j\neq i}(x-\alpha_j)}.
\end{align*}
The last equality holds because $\prod_{\alpha \in \f{q}^*}\alpha=1$. 
Comparing the terms of degree $q-1$ of both sides, we get
\begin{align}\label{dual:polynomial}
0=\sum_{i=1}^{q}a(\alpha_i)b(\alpha_i).
\end{align}
Taking $a(x)=x^{k_1}$ with $0\leq k_1 \leq k-2$ and $b(x)=x^{k_2}$ with $0\leq k_2 \leq q-k$, we deduce 
the orthogonality relation in 
Equation~(\ref{dual:Vander}).

Define the extended $k\times (q+1)$ matrix of $M_{f}(q,k-1)$ to be 
$$	M^E_{f}(q,k-1)=\left(
\begin{array}{ccccc}
1 & 1 & \cdots & 1 &0 \\
\alpha_1 & \alpha_2 & \cdots & \alpha_q  &0\\
\vdots & \vdots & \ddots & \vdots & \vdots \\
\alpha_1^{k-2} & \alpha_2^{k-2} & \cdots & \alpha_q^{k-2}&0 \\
f(\alpha_1) & f(\alpha_2) & \cdots & f(\alpha_q )&1\\
\end{array}
\right). $$
\begin{lem}\label{daul:extRS}
	Let $w_i=-\sum_{j=1}^{q}\alpha_j^{i}f(\alpha_j)$ for $i=0,1,\cdots,q-k,$ and
	 {\small
	\begin{align*}
	M^E_f(q,k-1)^\perp&=\left(
	\begin{array}{ccccc}
	1 & 1 & \cdots & 1 & w_0 \\
	\alpha_1 & \alpha_2 & \cdots & \alpha_q & w_1 \\
	\vdots & \vdots & \ddots & \vdots  & \vdots \\
	\alpha_1^{q-k} & \alpha_2^{q-k} & \cdots & \alpha_q^{q-k}& w_{q-k} \\
	\end{array}
	\right).
	\end{align*}}
Then we have
\begin{align}
  M^E_{f}(q,k-1)(M^E_f(q,k-1)^\perp)^T=0.
\end{align}
\end{lem}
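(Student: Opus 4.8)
The plan is to verify that $M^E_f(q,k-1)\,(M^E_f(q,k-1)^\perp)^T = 0$ by computing the $(s,t)$ entry of the product directly, where $0 \le s \le k-1$ indexes the rows of $M^E_f(q,k-1)$ (so row $s$ is $(\alpha_1^s, \dots, \alpha_q^s, 0)$ for $s \le k-2$ and $(f(\alpha_1), \dots, f(\alpha_q), 1)$ for $s = k-1$), and $0 \le t \le q-k$ indexes the rows of $M^E_f(q,k-1)^\perp$ (row $t$ being $(\alpha_1^t, \dots, \alpha_q^t, w_t)$). First I would handle the rows $s \le k-2$: the inner product with row $t$ of the perp matrix is $\sum_{i=1}^q \alpha_i^{s+t} + 0\cdot w_t = \sum_{i=1}^q \alpha_i^{s+t}$. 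Since $0 \le s+t \le (k-2)+(q-k) = q-2$, this is a power sum $\sum_{\alpha \in \f{q}} \alpha^{s+t}$ over all of $\f{q}$ with exponent strictly between $0$ and $q-1$, hence equals $0$ by the standard character-sum identity (equivalently, this is exactly the orthogonality relation \eqref{dual:Vander} already established, applied with the extra zero/$w_t$ coordinates appended). This disposes of the first $k-1$ rows of the product.

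Next I would handle the last row, $s = k-1$: its inner product with row $t$ of the perp matrix is $\sum_{i=1}^q f(\alpha_i)\alpha_i^t + 1\cdot w_t$. By the very definition $w_t = -\sum_{j=1}^q \alpha_j^t f(\alpha_j)$, this sum is $\sum_{i=1}^q f(\alpha_i)\alpha_i^t - \sum_{j=1}^q \alpha_j^t f(\alpha_j) = 0$. So every entry in the last row of the product also vanishes, and the two cases together give $M^E_f(q,k-1)\,(M^E_f(q,k-1)^\perp)^T = 0$.

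This lemma is essentially a bookkeeping statement: it extends the Reed-Solomon duality \eqref{dual:Vander} to the one-point extensions of the matrices, and the $w_i$ are defined precisely so that the bordered rows remain orthogonal. There is no real obstacle; the only point requiring a moment's care is keeping the exponent bound $s+t \le q-2$ correct so that the power-sum identity applies — this is where the hypothesis on the shapes of the two matrices (block sizes $k-1$ and $q+1-k$, which sum to $q$) is used, exactly as in the proof of \eqref{dual:Vander} above. One should also note that $M^E_f(q,k-1)^\perp$ as written has $q-k+1 = q+1-k$ rows and $q+1$ columns, the complementary dimension, consistent with it spanning (a subspace related to) the dual code of the extended code.
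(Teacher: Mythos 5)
Your proof is correct and follows essentially the same route as the paper: the first $k-1$ rows are handled by the orthogonality relation \eqref{dual:Vander} (your explicit power-sum computation, including the $s+t=0$ case where the sum is $q=0$ in $\f{q}$, is just that relation unpacked), and the last row vanishes by the definition of $w_t$. No gaps.
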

\begin{proof}
	It follows from Equation~(\ref{dual:Vander}) that the rows of $M^E_f(q,k-1)^\perp$ are orthogonal to the first $k-1$ rows of $M^E_f(q,k-1).$ From the definition of $w_i$, we deduce that the rows of $M^E_f(q,k-1)^\perp$ are also orthogonal to the last row of $M^E_f(q,k-1).$ So we have $ M^E_{f}(q,k-1)(M^E_f(q,k-1)^\perp)^T=0.$
\end{proof}

Note that the linear code $C$ generated by rows of $M^E_f(q,k-1)^\perp$ is the extension of Reed-Solomon code $RS_q(q+1-k)$ by one digit. Seroussi and Roth considered the problem when the code $C$ preserves the MDS property.
\begin{lem}[{\cite[Theorem~1]{SR86}}]\label{ExtofNRC}
	If $\max\{3, \lfloor (q-1)/2\rfloor\}\leq k\leq q-2$ ($\max\{4,\lfloor (q-1)/2\rfloor\}\leq k\leq q-3$ if $q$ is even), every $q+1-k$ columns of $M^E_f(q,k-1)^\perp$ are linearly independent if and only if $$w=(w_0,w_1,\cdots,w_{q-k})^T=(0,0,\cdots,0,a)^T$$ for some $a\in\f{q}^*.$
\end{lem}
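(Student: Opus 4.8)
The plan is to rewrite the column‑independence condition on $M^E_f(q,k-1)^\perp$ as an explicit arithmetic condition on the coefficients of $f$, settle the easy implication, and then isolate the one genuinely hard point. Throughout, write $[x^j]g$ for the coefficient of $x^j$ in a polynomial $g$, and reduce $f$ modulo $x^q-x$ so that $f(x)=\sum_{n=0}^{q-1}c_nx^n$; one then checks directly, using $\sum_{\alpha\in\f{q}}\alpha^m=-1$ when $(q-1)\mid m$ and $m\ge 1$ and $=0$ otherwise, that $w_\ell=c_{q-1-\ell}$, so that the claimed special form $w=(0,\dots,0,a)$ with $a\in\f{q}^{*}$ is exactly the assertion $\deg f=k-1$.

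First I would identify which sets of $q+1-k$ columns of $M^E_f(q,k-1)^\perp$ can fail to be independent. Denote its first $q$ columns by $v_j=(1,\alpha_j,\dots,\alpha_j^{q-k})^{T}$ and its last column by $w=(w_0,\dots,w_{q-k})^{T}$. Any $q+1-k$ of the $v_j$ span $\f{q}^{q+1-k}$, being the columns of a square Vandermonde matrix with distinct nodes, so ``every $q+1-k$ columns independent'' is equivalent to: for every $(q-k)$-subset $T=\{\beta_1,\dots,\beta_{q-k}\}$ of $\f{q}$, the vector $w$ is not in $\mathrm{span}(v_{\beta_1},\dots,v_{\beta_{q-k}})$. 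A hyperplane of $\f{q}^{q+1-k}$ is the kernel of a nonzero functional $c=(c_0,\dots,c_{q-k})$, and $c$ annihilates $v_{\beta_1},\dots,v_{\beta_{q-k}}$ exactly when the polynomial $\sum_\ell c_\ell x^\ell$ of degree $\le q-k$ vanishes at the $q-k$ distinct points $\beta_s$, which forces it to equal $c_{q-k}\prod_s(x-\beta_s)$ with $c_{q-k}\neq 0$. Hence the independence condition becomes: for every monic $P(x)$ of degree $q-k$ that splits into distinct linear factors over $\f{q}$, one has $\sum_{\ell=0}^{q-k}\big([x^\ell]P\big)w_\ell\neq 0$. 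Substituting $w_\ell=-\sum_{\alpha\in\f{q}}\alpha^\ell f(\alpha)$ and writing $P=\prod_{\beta\in T}(x-\beta)$, a short computation identifies this quantity with $[x^{q-1}]\big(f(x)P(x)\big)=\sum_{i=0}^{q-k}(-1)^i e_i(T)\,c_{k-1+i}$, where $e_i(T)$ is the $i$-th elementary symmetric function of $T$; equivalently (via $\prod_{\beta\in\f{q}\setminus\{\alpha\}}(\alpha-\beta)=-1$) it is, up to sign, the $x^{k-1}$-coefficient of the degree-$\le(k-1)$ interpolant of $f$ through $\f{q}\setminus T$. So the code is MDS if and only if $\sum_{i=0}^{q-k}(-1)^i e_i(T)c_{k-1+i}\neq 0$ for every $(q-k)$-subset $T$. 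When $w=(0,\dots,0,a)$, i.e.\ $c_k=\dots=c_{q-1}=0$ and $c_{k-1}=a\neq 0$, this sum is $c_{k-1}=a$ for every $T$; that proves the ``if'' direction.

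For the ``only if'' direction I would argue by contraposition. Assume $w$ is not of the special form, equivalently $\deg f\neq k-1$, and exhibit a $(q-k)$-subset $T$ of $\f{q}$ with $\sum_i(-1)^i e_i(T)c_{k-1+i}=0$. If $\deg f\le k-2$ this vanishes for every $T$ and any $q-k<q$ distinct field elements work. If $\deg f\ge k$, then $P\mapsto[x^{q-1}]\big(fP\big)$ is a genuinely nonzero $\f{q}$-linear functional on the $(q-k)$-dimensional affine space of monic degree-$(q-k)$ polynomials, and the problem reduces to: its zero locus, an affine hyperplane, contains a polynomial that splits into $q-k$ distinct linear factors over $\f{q}$. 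I would attempt this by a descending induction on $q-k$, peeling off one factor $x-\gamma$ at a time and choosing $\gamma\in\f{q}$ so that the induced functional on monic degree-$(q-k-1)$ polynomials stays nonzero and the splitting condition remains satisfiable; the hypothesis $k\ge\lfloor(q-1)/2\rfloor$, i.e.\ $q-k\le\lceil(q+1)/2\rceil$, is precisely what keeps enough field elements unused at each stage (equivalently, one builds the complementary $k$-set greedily using the divided-difference recursion for $f$).

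This last step is the main obstacle. A crude count is hopeless here, since the polynomials with $q-k$ distinct roots in $\f{q}$ form only a $\binom{q}{q-k}/q^{q-k}$ fraction of all monic degree-$(q-k)$ polynomials, which is exponentially small once $q-k\sim q/2$; one must exploit the structure of the functional, and that is exactly where the bound on $k$ is consumed. This is the content of \cite[Theorem~1]{SR86}, and the part I expect to require real work; everything before it is bookkeeping with Vandermonde and interpolation identities. (For $q$ even the arithmetic of ``splits into distinct linear factors'' is more delicate, which is the source of the extra restriction $k\le q-3$ and $k\ge 4$ in the even case.)
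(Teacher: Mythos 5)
Your reduction is correct and worth having: the identity $w_\ell=c_{q-1-\ell}$, the observation that only column sets containing the last column can fail, and the reformulation of MDS-ness as ``$[x^{q-1}](fP_T)=\sum_{i=0}^{q-k}(-1)^ie_i(T)c_{k-1+i}\neq 0$ for every $(q-k)$-subset $T\subset\f{q}$'' all check out, and the ``if'' direction follows immediately from it. Note, though, that the paper itself offers no proof of this lemma --- it is imported verbatim as Theorem~1 of Seroussi--Roth --- so there is no internal argument to compare yours against; the comparison can only be between your attempt and the cited source.

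As a standalone proof, however, there is a genuine gap, and you have located it yourself: the ``only if'' direction in the case $\deg f\geq k$. You must produce a $(q-k)$-subset $T$ with $[x^{q-1}](fP_T)=0$, and your proposed descending induction --- peel off a factor $x-\gamma$ and choose $\gamma$ so that the induced functional on monic polynomials of one lower degree ``stays nonzero and the splitting condition remains satisfiable'' --- is stated without any argument that such a $\gamma$ exists at each stage. This is not a routine verification: a naive greedy choice can get stuck, and the actual Seroussi--Roth argument is a careful analysis (their proof of Theorem~1 runs through a case analysis on the weight distribution of the extended code and invokes properties of arcs in $PG(k-1,q)$, which is where the hyperoval exceptions for even $q$ and the threshold $k\geq\lfloor(q-1)/2\rfloor$ genuinely enter; your parenthetical explanation that even $q$ makes ``splits into distinct linear factors more delicate'' does not capture this --- the obstruction is the existence of hyperovals, i.e.\ $(q+2)$-arcs, not an arithmetic subtlety about square-free splitting). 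In short, you have correctly unfolded the statement into its combinatorial core and disposed of the trivial direction, but the core itself --- which is the entire content of the cited theorem --- remains unproven in your write-up. Since the paper uses this lemma as a black box, deferring to \cite{SR86} is acceptable for the paper's purposes, but then the honest thing is to present your computation as a translation of the cited result rather than as a proof of it.
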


For small $k\leq p$, Ball gave the structure of MDS codes of length $(q+1)$ and dimension $q+1-k$.
\begin{lem}[{\cite[Theorem~1.10]{Ball12}}]\label{structure:q+1arc}
	For $3\leq k\leq p,$ every $q+1-k$ columns of $M^E_f(q,k-1)^\perp$ are linearly independent if and only if $$w=(w_0,w_1,\cdots,w_{q-k})^T=(0,0,\cdots,0,a)^T$$ for some $a\in\f{q}^*.$
\end{lem}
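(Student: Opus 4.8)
The plan is to deduce the lemma from Ball's structure theorem for long MDS codes in the range $k\le p$, combined with the rigidity of normal rational curves, after disposing of the easy direction by a determinant computation. Throughout, $\mathrm{PG}(n,q)$ denotes the projective $n$-space over $\f{q}$, and GRS abbreviates generalized Reed-Solomon.

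I would first handle the sufficiency direction directly. Suppose $w=(0,\dots,0,a)^T$ with $a\in\f{q}^*$. Any $q+1-k$ columns of $M^E_f(q,k-1)^\perp$ either consist of $q+1-k$ of the first $q$ columns, in which case the relevant determinant is a Vandermonde determinant in distinct elements $\alpha_j$ and hence nonzero; or they include the last column $w$, in which case expanding the $(q+1-k)\times(q+1-k)$ determinant along $w$ gives, up to sign, $a$ times a $(q-k)\times(q-k)$ Vandermonde determinant, again nonzero. So every $q+1-k$ columns are linearly independent.

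For the converse, assume the $[q+1,q+1-k]$ code $C$ generated by the rows of $M^E_f(q,k-1)^\perp$ is MDS. Since a linear code is MDS if and only if its dual is, and since by Lemma~\ref{daul:extRS} the code $C^\perp$ is generated by the rows of $M^E_f(q,k-1)$, the dual $C^\perp$ is an MDS code of dimension $k\le p$. At this point I would invoke Ball's theorem: every $[q+1,k]$ MDS code over $\f{q}$ with $k\le p$ is a GRS code. Dualizing once more, $C$ itself is a GRS code, so there are $q+1$ distinct points $\gamma_1,\dots,\gamma_{q+1}\in\f{q}\cup\{\infty\}$ and nonzero scalars $u_1,\dots,u_{q+1}$ such that $C$ is generated by the rows of the matrix $G$ whose $j$-th column is $u_j(1,\gamma_j,\dots,\gamma_j^{q-k})^T$ (with the usual convention at $\infty$), the columns of $G$ being indexed by the same coordinate positions as those of $M^E_f(q,k-1)^\perp$. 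Since two generator matrices of the same code differ by an invertible left factor, $M^E_f(q,k-1)^\perp=AG$ for some $A\in GL_{q+1-k}(\f{q})$.

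The last step would read everything projectively in $\mathrm{PG}(q-k,q)$. The columns of $G$ are exactly the $q+1$ points of the standard normal rational curve $\{[1:t:\cdots:t^{q-k}]:t\in\f{q}\}\cup\{[0:\cdots:0:1]\}$, so the columns of $M^E_f(q,k-1)^\perp$, being their images under the projectivity induced by $A$, again form a normal rational curve $P$. But the first $q$ of these columns are the points $[1:\alpha_j:\cdots:\alpha_j^{q-k}]$ for $1\le j\le q$ of the standard normal rational curve $\Gamma_0$. Since a normal rational curve in $\mathrm{PG}(n,q)$ is determined by any $n+3$ of its points, and here $n=q-k$ with $q\ge q-k+3$ (equivalently $k\ge3$), these $q$ common points force $P=\Gamma_0$; hence $[w]$ must be the one point of $\Gamma_0$ not of the form $[1:\alpha_j:\cdots:\alpha_j^{q-k}]$, namely $[0:\cdots:0:1]$, so $w=(0,\dots,0,a)^T$ with $a\in\f{q}^*$. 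The main obstacle is of course the appeal to Ball's theorem: the assertion that every $[q+1,k]$ MDS code with $k\le p$ is GRS is the deep input and has no short proof. Everything else --- the easy direction, the two dualities, and the uniqueness of a normal rational curve through $n+3$ of its points --- is routine linear algebra and classical finite geometry; the only point needing care is the bookkeeping at $\infty$ together with the observation that only the last coordinate of $M^E_f(q,k-1)^\perp$ is not already pinned to a fixed Vandermonde column, which is exactly what the last step exploits.
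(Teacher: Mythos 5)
Your proposal is correct, but note that the paper does not prove this lemma at all: it is quoted verbatim as \cite[Theorem~1.10]{Ball12}, so there is no internal proof to compare against. What you have written is a sound derivation of the stated form of the lemma from Ball's main classification theorem (every $[q+1,k]$ MDS code over $\f{q}$ with $k\leq p$ is GRS, equivalently every $(q+1)$-arc in $\mathrm{PG}(k-1,q)$ is a normal rational curve): the easy direction by cofactor expansion, the two dualizations using Lemma~\ref{dual:MDS} and the fact that duals of GRS codes are GRS, and the rigidity step that a normal rational curve in $\mathrm{PG}(q-k,q)$ is pinned down by $q-k+3\leq q$ of its points, which forces $[w]=[0:\cdots:0:1]$. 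You correctly isolate the one non-routine input (Ball's theorem itself, which you do not reprove), and you correctly apply it to the $k$-dimensional dual rather than to the $(q+1-k)$-dimensional code where the hypothesis $k\leq p$ would be useless; the remaining bookkeeping checks out.
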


The following lemma is well-known as the property of MDS codes: the dual code of an MDS code is still an MDS code (see~\cite[Chapter~11]{book:MW}).
\begin{lem}\label{dual:MDS}
	The following two statements are equivalent:
	\begin{enumerate}
		\item every $k$ columns of $M^E_{f}(q,k-1)$ are linearly independent,
		\item every $q+1-k$ columns of $M^E_f(q,k-1)^\perp$ are linearly independent. 		
	\end{enumerate}
	\end{lem}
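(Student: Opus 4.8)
The plan is to recognize $M^E_f(q,k-1)$ and $M^E_f(q,k-1)^\perp$ as a generator/parity-check pair for a single linear code and then invoke the standard fact that the dual of an MDS code is MDS. Set $n=q+1$, write $G:=M^E_f(q,k-1)$ and $H:=M^E_f(q,k-1)^\perp$, and let $D\subseteq\f{q}^n$ be the code spanned by the rows of $G$. By Lemma~\ref{daul:extRS} we have $GH^T=0$, hence $\mathrm{rowspace}(H)\subseteq D^\perp$.

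The first step is a rank count. The $k\times k$ submatrix of $G$ on the columns indexed by $\alpha_1,\dots,\alpha_{k-1}$ together with the last column $(0,\dots,0,1)^T$ equals, up to sign (expand along the last column), the $(k-1)\times(k-1)$ Vandermonde determinant $\prod_{1\leq i<j\leq k-1}(\alpha_j-\alpha_i)\neq 0$; thus $\mathrm{rank}(G)=k$ and $\dim D=k$. Similarly, the $(q+1-k)\times(q+1-k)$ submatrix of $H$ on the columns indexed by $\alpha_1,\dots,\alpha_{q-k+1}$ is a Vandermonde matrix on distinct nodes, so $\mathrm{rank}(H)=q+1-k$. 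Since $\dim D^\perp=n-k=q+1-k$ and $\mathrm{rowspace}(H)$ is a subspace of $D^\perp$ of that same dimension, we conclude $\mathrm{rowspace}(H)=D^\perp$. Hence $G$ is a generator matrix of $D$, and $H$ is a generator matrix of $D^\perp$, equivalently a parity-check matrix of $D$.

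Now the lemma reduces to textbook coding theory. A linear $[n,k]$-code is MDS if and only if every $k$ columns of any of its generator matrices are linearly independent; applied to $D$ with generator matrix $G$ this says that statement (1) holds iff $D$ is MDS, and applied to the $[n,n-k]$-code $D^\perp$ with generator matrix $H$ it says that statement (2) holds iff $D^\perp$ is MDS. Since the dual of an MDS code is MDS (see \cite[Chapter~11]{book:MW}), the conditions ``$D$ is MDS'' and ``$D^\perp$ is MDS'' are equivalent, which finishes the argument.

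There is no real obstacle here; the only point requiring care is the bookkeeping in the rank count, i.e.\ checking that the distinguished last column of $G$, the column $w$ of $H$, and the extra row $(f(\alpha_1),\dots,f(\alpha_q),1)$ of $G$ do not spoil full rank --- which is immediate from the embedded Vandermonde blocks exhibited above. If one prefers not to quote MDS--dual duality as a black box, one can argue directly: since $\mathrm{rank}(G)=k$ and $\{c:Hc^T=0\}=D$, a set $S$ of $k$ columns of $G$ is linearly dependent iff $D$ contains a nonzero codeword vanishing on $S$, iff (such a codeword being supported on the complementary set $\bar S$ of size $q+1-k$) the columns of $H$ indexed by $\bar S$ are linearly dependent; as $S\mapsto\bar S$ is a bijection between $k$-subsets and $(q+1-k)$-subsets of the coordinates, (1) and (2) are equivalent without any reference to minimum distance.
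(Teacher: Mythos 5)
Your proof is correct and follows essentially the same route as the paper, which simply cites the standard fact that the dual of an MDS code is MDS (MacWilliams--Sloane, Ch.~11) without writing out details. Your rank count showing that $M^E_f(q,k-1)^\perp$ really does generate the full dual code, and your optional self-contained column-complementation argument, are both accurate fillings-in of what the paper leaves to the reference.
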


\subsection{The proof of Theorem~\ref{thm4}}
We have seen that Theorem~\ref{thm4} is equivalent to Theorem~\ref{thm5}. 
Furthermore, one direction of Theorem~\ref{thm5} already holds unconditionally on $k$. So we only need to prove that for any integer $k$ such that $3 \leq k \leq q-2$ ( $4\leq k\leq q-3$ for $q$ even) and for any polynomial $f(x)\in \f{q}[x]$ of degree $\deg(f)\leq q-1$,  if $N_q^*(D_f)=0$ then $\deg f(x)= k-1$ provided that $k\leq p$ or $k\geq \lfloor \frac{q+1}{2}\rfloor.$

 \begin{proof}
	Since $N_q^*(D_f)=0$, for any  pairwise distinct $\alpha_1,\cdots,\alpha_k\in \f{q}$, we have 
	$$D_f(\alpha_1,\cdots,\alpha_k)\neq 0.$$
	This condition is equivalent to that any $k$ columns of $M_f(q,k-1)$ are linearly independent,  which is also equivalent to that any $k$ columns of $M^E_f(q,k-1)$ are linearly independent. By Lemma~\ref{dual:MDS}, it is equivalent to that any $q+1-k$ columns of $M^E_f(q,k-1)^\perp$ are linearly independent. For $k\leq p$ by Lemma~\ref{structure:q+1arc} and for $k\geq \lfloor \frac{q+1}{2}\rfloor$ by Lemma~\ref{ExtofNRC}, we always have  $$w=(w_0,w_1,\cdots,w_{q-k})^T=(0,0,\cdots,0,a)^T$$ for some $a\in\f{q}^*.$
	 That is, we get a system of linear equations on variables $f(\alpha_1), f(\alpha_2), \cdots, f(\alpha_q)$:
			\begin{align}\label{non-hom.sol}
		\left(
		\begin{array}{cccc}
		1 & 1 & \cdots & 1 \\
		\alpha_1 & \alpha_2 & \cdots & \alpha_q \\
		\vdots & \vdots & \ddots & \vdots  \\
		\alpha_1^{q-k-1} & \alpha_2^{q-k-1} & \cdots & \alpha_q^{q-k-1}  \\
		\alpha_1^{q-k} & \alpha_2^{q-k} & \cdots & \alpha_q^{q-k} 
		\end{array}
		\right)	\left(
		\begin{array}{c}
		f(\alpha_1) \\
		f(\alpha_2)\\
		\vdots \\
	    f(\alpha_q)\\ 
		\end{array}
		\right)=	\left(
		\begin{array}{c}
		0 \\
		0\\
		\vdots \\
		0\\
		a\\ 
		\end{array}
		\right).
		\end{align}
		Note that 
		\begin{itemize}
			\item by Equality~(\ref{dual:Vander}), 
			to satisfy the first $q-k$ equations, the vector $$(f(\alpha_1), f(\alpha_2), \cdots, f(\alpha_q))$$ must belong to 
			the $\f{q}$-linear vector space generated by rows of 
			$$	M(q,k)=\left(
			\begin{array}{cccc}
			1 & 1 & \cdots & 1  \\
			\alpha_1 & \alpha_2 & \cdots & \alpha_q  \\
			\vdots & \vdots & \ddots & \vdots  \\
			\alpha_1^{k-1} & \alpha_2^{k-1} & \cdots & \alpha_q^{k-1} \\
			\end{array}
			\right). $$
			\item This means that there is a polynomial $g(x) \in \f{q}[x]$ of degree at most $k-1$ such that $f(\alpha_i) =g(\alpha_i)$ for all 
			$1\leq i\leq q$. Since $\deg(f)\leq q-1$, it forces that $f(x)=g(x)$, which has degree at most $k-1$. On the other hand, if $\deg f(x)\leq k-2,$ then it is easy to see that $D_f$ is the zero polynomial which contradicts to the assumption $N_q^*(D_f)=0$. So $\deg f(x)=k-1.$ The proof is complete. 
		\end{itemize}

\end{proof}

\begin{rem}
	Conjecture~22 in~\cite{BL19} says that for $6\leq k\leq q-5$, any MDS code of length $(q+1)$ and dimension $k$ is equivalent to the Reed-Solomon code $RS_q(k)$ up to permutation of the coordinates and multiplication of symbols appearing in a fixed coordinate by a non-zero scalar. This conjecture together with the proof above imply  Conjecture~\ref{conj:zeroofcompsymmpoly} for $6\leq k\leq q-5$. 
\end{rem}

\section{A generalization}
In this section, we shall work in the more general framework of generalized Vandermonde determinant problem. This corresponds to solving complete symmetric polynomials on a subset of the finite field. 

Let $2\leq k\leq n\leq q$ be integers. 
Let $S\subset \f{q}$ be a subset of cardinality $n$. For any polynomial $f(x)\in \f{q}[x]$ let 	
$$N_{S}^*(D_f):=\# \{(\alpha_1,..., \alpha_k)\in S^k\,|\, D_f(\alpha_1,..., \alpha_k)=0\,~\mbox{and}~\, 
\alpha_i\not= \alpha_j ~\forall ~i\not=j\}.$$

This general framework is motivated by the following example. If $S\subsetneq \f{q}$ is a proper subset, then for any $\alpha\in \f{q}\setminus S$,  we have 
$(\alpha_i-\alpha)^{q-2} = (\alpha_i-\alpha)^{-1}$ and thus 
\[
D_{(x-\alpha)^{q-2}}(\alpha_1,\cdots,\alpha_k)=\frac{1}{\prod_{i=1}^{k}(\alpha_i-\alpha)}\prod_{1\leq i<j\leq k}(\alpha_j-\alpha_i)\neq 0,
\]
for any pairwise distinct $\alpha_1,\cdots,\alpha_k\in S.$ So Theorem~\ref{thm5} does not hold anymore if we consider zeros of $D_f$ in a proper subset $S\subset \f{q}$.

\begin{prob}\label{vanderprob}
	For a given subset $S\subset\f{q}$ and polynomial $f(x)\in \f{q}[x]$ with $\deg(f)\leq q-1$, is there any efficient way (e.g. polynomial time in $\log q$, $|S|$ and $\deg(f)$) to determine if $D_f(\alpha_1,\cdots,\alpha_k)\neq 0$ for all pairwise distinct $\alpha_1,\cdots,\alpha_k\in S?$
\end{prob}

This algorithmic problem is difficult in such a generality. In fact, we will soon see that this problem is NP-hard for general $S$. 
For even $q$, $k=3$ and $S=\f{q}$, the problem is the classification of hyperovals in finite projective plane $\mathbb{P}^2(\f{q})$, which is still open (see~\cite[Section~14.1]{BL19} for the collection of known families of hyperovals).

The brute-force algorithm takes time 
\[
\binom{|S|}{k}\times (\mbox{time of computing the determinant of $k\times k$ matrix)},
\]
which is exponential in $|S|$ when $k=c|S|$ for any $0<c<1.$

Here are some examples of low degrees by Proposition~\ref{fink}.
\begin{enumerate}
	\item In the case $d=k-1$, $f(x)=\sum_{i=0}^{k-1}a_ix^i$\,\,($a_{k-1}\neq 0$), so
	\[
	C_f(x_1, \cdots, x_k)=a_{k-1}\,\,\mbox{and}\,\,
	D_f(x_1,\cdots, x_k)=a_{k-1}\prod_{1\leq i<j\leq k}(x_i-x_j).
	\]
	Hence, $D_f(\alpha_1,\cdots,\alpha_k)\neq 0$ for any  pairwise distinct $\alpha_1,\cdots,\alpha_k\in S.$
	\item In the case $d=k$, $f(x)=\sum_{i=0}^{k}a_ix^i$\,\,($a_k\neq 0$), so
	\[
	C_f(x_1, \cdots, x_k)=a_{k-1}+a_k(x_1+\cdots+ x_k)
	\]
	is linear and
	\[
	D_f(x_1, \cdots, x_k)=(a_{k-1}+a_k(x_1+\cdots+ x_k))\prod_{1\leq i<j\leq k}(x_i-x_j).
	\]
	Hence, for any  pairwise distinct $\alpha_1,\cdots,\alpha_k\in S$, we have $D_f(\alpha_1,\cdots, \alpha_k)= 0$ if and only if $a_{k-1}+a_k(\alpha_1+\cdots+ \alpha_k)=0.$
	This is exactly the $k$-subset sum problem ($k$-SSP) over $S$ which is known to be \textbf{NP-complete} for general $S$. 
	For special $S$, e.g. $S=\f{q}$ or $\f{q}^*$, there is an explicit formula for $N_q^*(C_f)$,  see~\cite{LW08}, which implies that 
	$N_q^*(D_f)>0$ for $3\leq k \leq q-2$. 
	\item In the case $d=k+1$, $f(x)=\sum_{i=0}^{k+1}a_ix^i$\,\,($a_{k+1}\neq 0$), so
	\[
	C_f(x_1, \cdots, x_k)=a_{k-1}+a_k\sum_{i=1}^kx_i+a_{k+1}\left(\sum_{i=1}^kx_i^2+\sum_{1\leq i< j\leq k}x_ix_j\right).
	\]
	is quadratic. It was shown in~\cite[Theorem~4.2]{ZFL13} that $N_q^*(D_f)>0$ for $3\leq k \leq q-2$ ($k \not= q-2$ if $q$ is even).	
\end{enumerate}

Conjecture \ref{conj:zeroofcompsymmpoly} is equivalent to the following conjecture. 

\begin{conj}\label{conj:wholefield} Let $3\leq k\leq q-2$ ($4\leq k\leq q-3$ if $q$ is even).	
	For any polynomial $f(x)\in \f{q}[x]$ of degree $ k\leq \deg(f)\leq q-1$, 
	there exist pairwise distinct $\alpha_1,\cdots,\alpha_k\in \f{q}$ such that $$D_f(\alpha_1,\cdots,\alpha_k)=0.$$ 
\end{conj}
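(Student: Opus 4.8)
The plan is to establish the contrapositive directly inside the matrix/coding framework of Section~\ref{proofoftheorem2.5}, rather than through the complete-symmetric-polynomial side. Fix $f$ with $k\leq\deg f\leq q-1$ and suppose, for contradiction, that $D_f(\alpha_1,\cdots,\alpha_k)\neq 0$ for \emph{every} choice of pairwise distinct $\alpha_1,\cdots,\alpha_k\in\f{q}$. By the dictionary developed there, this is exactly the assertion that every $k$ columns of the generalized Vandermonde matrix $M_f(q,k-1)$ are linearly independent, i.e.\ that $\beta_f=(f(\alpha_1),\cdots,f(\alpha_q))$ is a deep hole of $RS_q(k-1)$. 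The goal is to squeeze out $\deg f\leq k-1$, contradicting $\deg f\geq k$ and thereby forcing a distinct-coordinate zero of $D_f$ to exist.

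First I would pass to the augmented matrix $M^E_f(q,k-1)$. Since its extra column is $(0,\cdots,0,1)^T$, any $k\times k$ minor that uses this column reduces, upon expansion along it, to a $(k-1)\times(k-1)$ Vandermonde minor in distinct nodes, hence is nonzero; the minors not using it are $k\times k$ minors of $M_f(q,k-1)$, which are nonzero by hypothesis. Thus every $k$ columns of $M^E_f(q,k-1)$ are independent, so this length-$(q+1)$, dimension-$k$ code is MDS. By the self-duality of MDS codes (Lemma~\ref{dual:MDS}), every $q+1-k$ columns of the dual matrix $M^E_f(q,k-1)^\perp$ are then independent as well.

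Next I would apply the structural classification of such length-$(q+1)$ MDS extensions to pin down the extension vector $w=(w_0,\cdots,w_{q-k})^T$. In the range $k\leq p$, Lemma~\ref{structure:q+1arc} forces $w=(0,\cdots,0,a)^T$ with $a\in\f{q}^*$; in the range $k\geq\lfloor(q+1)/2\rfloor$, Lemma~\ref{ExtofNRC} yields the identical conclusion, the hypotheses $\max\{3,\lfloor(q-1)/2\rfloor\}\leq k\leq q-2$ (and its even-$q$ analogue) being subsumed by the conjecture's range. The vanishing $w_0=\cdots=w_{q-k-1}=0$ says, via the orthogonality relation~(\ref{dual:Vander}), that $\beta_f$ lies in the row space of $M(q,k)$; hence $f$ agrees on all of $\f{q}$ with some $g$ of degree $\leq k-1$, and since $\deg f\leq q-1$ this gives $f=g$ and $\deg f\leq k-1$. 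This contradiction proves the conjecture in both ranges.

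The main obstacle is that this argument, exactly like the proof of Theorem~\ref{thm4}, is powered entirely by Lemmas~\ref{structure:q+1arc} and~\ref{ExtofNRC}, which between them leave uncovered the intermediate band $p<k<\lfloor(q+1)/2\rfloor$. Closing this gap is precisely the content of the still-open Cheng--Murray conjecture (equivalently, the normal rational curve conjecture for $(q+1)$-arcs), so the present route cannot settle Conjecture~\ref{conj:wholefield} for all $3\leq k\leq q-2$. A complete proof would require either a new MDS-structure theorem valid in the intermediate range, or a fundamentally different argument---for instance a character-sum or geometric estimate showing $N_q^*(C_f)>0$ whenever $C_f$ has positive degree---that bypasses the arc classification altogether.
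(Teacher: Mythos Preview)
The statement you are asked to prove is labeled a \emph{conjecture} in the paper precisely because the paper does not prove it in full generality; the paper explicitly notes that Conjecture~\ref{conj:wholefield} is equivalent to Conjecture~\ref{conj:zeroofcompsymmpoly}, which is only established under the extra hypothesis $k\leq p$ or $k\geq\lfloor(q+1)/2\rfloor$ (Theorem~\ref{thm4}). Your proposal reproduces, essentially verbatim, the paper's proof of Theorem~\ref{thm5} in Section~3.3: pass from $M_f$ to $M^E_f$, invoke MDS duality (Lemma~\ref{dual:MDS}), apply the structural Lemmas~\ref{ExtofNRC} and~\ref{structure:q+1arc} to force $w=(0,\dots,0,a)^T$, and then read off $\deg f\leq k-1$ from the orthogonality relation~(\ref{dual:Vander}). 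Your justification for why the augmented column preserves the MDS property is slightly more explicit than the paper's one-line claim, but otherwise the arguments coincide.

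You are entirely correct that the gap $p<k<\lfloor(q+1)/2\rfloor$ is the obstruction, and that closing it amounts to the Cheng--Murray conjecture (equivalently the normal rational curve conjecture). The paper makes the same point in the remark following the proof of Theorem~\ref{thm5}. So your proposal is not a proof of the conjecture, nor does the paper have one; what you have written is a faithful account of the known partial result and its limitation.
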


This conjecture answers Problem~\ref{vanderprob} when $S=\f{q}$: $D_f(\alpha_1,\cdots,\alpha_k)\neq 0$ for all pairwise distinct $\alpha_1,\cdots,\alpha_k\in \f{q}$ if and only if $\deg(f)=k-1.$ 

Note that the conjecture is false if we restrict $\alpha_1,\cdots,\alpha_k$ in a proper subset $S$ of $\f{q}$. Suppose $\alpha\in \f{q}\setminus S,$ taking $f(x)=(x-\alpha)^{q-2}$, we have $D_f(\alpha_1,\cdots,\alpha_k)\neq 0$ for all pairwise distinct $\alpha_1,\cdots,\alpha_k\in S.$ 
If $|S|=q-1$, by a translation we can assume that $S=\f{q}^*$. In this case, we have the 
following similar conjecture. 

\begin{conj}\label{conj:nonzeros} Let $3\leq k\leq q-2$ ($4\leq k\leq q-3$ if $q$ is even). 
	For any polynomial $f(x)\in \f{q}[x]$ with $ k\leq \deg(f)\leq q-2$,  except those of the form $ax^{q-2}+g(x)$ for some $a\neq 0$ and polynomial $g(x)\in \f{q}[x]$ of degree $\deg(g)\leq k-2$, there exist pairwise distinct $\alpha_1,\cdots,\alpha_k\in \f{q}^*$ such that $$D_f(\alpha_1,\cdots,\alpha_k)=0.$$ 
\end{conj}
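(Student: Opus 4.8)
\medskip\noindent\textbf{A proposed proof.} The plan is to transplant the argument of Section~\ref{proofoftheorem2.5} (the proof of Theorem~\ref{thm5}) from the evaluation set $\f{q}$ to $\f{q}^*$, using the dictionary between $N^*(D_f)$ and arcs in projective space together with extendability results for arcs. Write $\f{q}^*=\{\alpha_1,\dots,\alpha_{q-1}\}$. First I would note that $N_{\f{q}^*}^*(D_f)=0$ is equivalent to saying that every $k$ of the columns of the $k\times(q-1)$ matrix $M'$ with columns $(1,\alpha_j,\dots,\alpha_j^{k-2},f(\alpha_j))^T$ are linearly independent, i.e.\ that these columns form a $(q-1)$-arc $\mathcal A$ in $\mathrm{PG}(k-1,q)$. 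Adjoining the column $P_\infty=(0,\dots,0,1)^T$ preserves the arc property: any $k\times k$ determinant using $P_\infty$ collapses, after cofactor expansion along its single nonzero entry, to a $(k-1)\times(k-1)$ Vandermonde determinant on distinct nodes, hence is nonzero. So $\mathcal A\cup\{P_\infty\}$ is a $q$-arc; observe that $[\,M'\mid P_\infty\,]$ is exactly the matrix $M^E_f(q,k-1)$ of Section~\ref{proofoftheorem2.5} with its $\alpha=0$ column deleted. Moreover, by Theorem~\ref{thm5} (in the ranges $k\le p$ or $k\ge\lfloor(q+1)/2\rfloor$) and $\deg f\ge k$, the full matrix $M^E_f(q,k-1)$ is \emph{not} MDS, and since $[\,M'\mid P_\infty\,]$ is already an arc, this means that reinstating the $\alpha=0$ column $(1,0,\dots,0,f(0))^T$ destroys the arc property.

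The heart of the matter, and where I expect the real difficulty, is the next step: showing that the $q$-arc $\mathcal A\cup\{P_\infty\}$, whose first $k-1$ coordinates already trace out all but one point of the degree-$(k-2)$ rational normal curve, nevertheless extends to a (necessarily unique) normal rational curve of length $q+1$ by adjoining some further column $P^*$ — which, by the previous paragraph, cannot be $(1,0,\dots,0,f(0))^T$. For $k\le p$ this should come out of Ball's structure theorem (Lemma~\ref{structure:q+1arc}) applied to a single-digit extension of $[\,M'\mid P_\infty\,]$, and for $k\ge\lfloor(q+1)/2\rfloor$ out of Seroussi--Roth / Kaipa (Lemma~\ref{ExtofNRC}); for $k=3$ and $q$ odd it is precisely Segre's classical fact that every $q$-arc in $\mathrm{PG}(2,q)$ lies on a conic. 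The obstacle is that the structure lemmas as quoted are phrased for a single-digit extension of the \emph{full-length} ($q$) Reed--Solomon code, whereas our arc is two short of maximal: the step from length $q-1$ to $q$ is free ($P_\infty$), but one must still certify that the resulting $q$-arc is not complete, uniformly over the whole range $3\le k\le q-2$. The case $q$ even with $k\in\{3,4\}$ is genuinely different — it is the hyperoval problem — which is consistent with the statement being only conjectural there.

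Granting that $[\,M'\mid P_\infty\mid P^*\,]$ generates a $[q+1,k]$ MDS code, I would finish as in the proof of Theorem~\ref{thm5}. This normal rational curve contains $P_\infty$ and the points $(1:\alpha:\cdots:\alpha^{k-2}:f(\alpha))$ for $\alpha\in\f{q}^*$, and projects from $P_\infty$ onto the standard rational normal curve in the first $k-1$ coordinates; hence its last coordinate is a fractional-linear reparametrisation of the node, i.e.\ there exist scalars, not all zero, with
$$f(\alpha)\,(d+e\alpha)=-\bigl(a_0+a_1\alpha+\cdots+a_{k-1}\alpha^{k-1}\bigr)\quad\text{for all }\alpha\in\f{q}^*.$$
If $e=0$ this forces $\deg f\le k-1$; if $e\ne0$ and $d\ne0$, then $-d/e\in\f{q}^*$, so $d+ex$ must divide the right-hand side (otherwise $f$ would have a pole in $\f{q}^*$) and again $\deg f\le k-1$; if $d=0$, then $e\ne0$ and, using $\alpha^{-1}=\alpha^{q-2}$ on $\f{q}^*$ and that two polynomials of degree $\le q-2$ agreeing on the $q-1$ points of $\f{q}^*$ coincide, one obtains $f(x)=ax^{q-2}+g(x)$ with $a\ne0$ and $\deg g\le k-2$. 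The first two possibilities contradict $\deg f\ge k$, so the third holds, which is precisely the exceptional form in the statement. For the matching direction, the computation recorded near the beginning of this section shows these exceptional $f$ really do have $N_{\f{q}^*}^*(D_f)=0$, since by Proposition~\ref{fink} one has $D_{ax^{q-2}+g}(\alpha_1,\dots,\alpha_k)=a\bigl(\prod_i\alpha_i\bigr)^{-1}\prod_{1\le i<j\le k}(\alpha_j-\alpha_i)\ne0$ for pairwise distinct $\alpha_i\in\f{q}^*$.
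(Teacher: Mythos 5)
The statement you are addressing is stated in the paper as a \emph{conjecture}: the paper offers no proof of it, only a closing one-sentence remark that the case $k\geq (q+1)/2$ follows from an argument similar to that of Theorem~\ref{thm5} combined with the \emph{original} theorem of Seroussi and Roth, which treats one-digit MDS extensions of generalized Reed--Solomon codes of any length $n\leq q$, not just the full-length instance quoted in Lemma~\ref{ExtofNRC}. Your outline is the natural transplant of the Section~\ref{proofoftheorem2.5} machinery to the evaluation set $\f{q}^*$, and several pieces of it are sound: the observation that adjoining $P_\infty$ costs nothing, the identification of $[\,M'\mid P_\infty\,]$ with $M^E_f(q,k-1)$ minus its $\alpha=0$ column, and the endgame deriving the exceptional form $ax^{q-2}+g(x)$ from a fractional-linear reparametrisation with a pole at $0$, together with the converse computation $D_{ax^{q-2}+g}=a\bigl(\prod_i\alpha_i\bigr)^{-1}\prod_{i<j}(\alpha_j-\alpha_i)\neq 0$, which matches the paper's own example $f=(x-\alpha)^{q-2}$ specialised to $\alpha=0$.

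However, the step you yourself flag is a genuine gap, not a technicality. Lemmas~\ref{ExtofNRC} and~\ref{structure:q+1arc} as quoted classify one-digit extensions of the \emph{full-length} Reed--Solomon code, i.e.\ arcs of size $q+1$, whereas here you must certify that a $q$-arc containing all but one point of a rational normal curve is incomplete and lies on a normal rational curve. For $k\geq\lfloor(q+1)/2\rfloor$ this can indeed be extracted from the original Seroussi--Roth theorem --- which is exactly what the paper asserts --- but your proposal does not carry out that reduction; it only points at the lemmas that do not directly apply. For $k\leq p$ the claim that Ball's structure theorem ``should'' supply the extension is not justified by the lemma as stated, and the extendability of an arbitrary $q$-arc uniformly over $3\leq k\leq q-2$ is precisely the open content of the normal-rational-curve/Cheng--Murray circle of problems (for $q$ even and $k\in\{3,4\}$ it is the hyperoval classification). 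So what you have is a correct reduction of the conjecture to an arc-extendability statement plus a proof of the easy direction; the conjecture itself remains unproved, and your write-up should present the middle step as an assumption rather than as something the quoted lemmas deliver.
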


A similar proof of Theorem~\ref{thm5} together the original theorem~\cite[Theorem~1]{SR86} of Seroussi and Roth can show Conjecture~\ref{conj:nonzeros} is true if $k \geq (q+1)/2$.

\bibliographystyle{plain}
\bibliography{rationalpoints}

\begin{thebibliography}{10}

\bibitem{Ball12}
Simeon Ball.
\newblock On large subsets of a finite vector space in which every subset of
  basis size is a basis.
\newblock {\em Journal of the European Mathematical Society}, 3(1-2):733--748,
  2011.

\bibitem{BL19}
Simeon {Ball} and Michel {Lavrauw}.
\newblock {Arcs in finite projective spaces}.
\newblock {\em e-prints arXiv:1908.10772}, Aug 2019.

\bibitem{BB20}
Matteo Bonini and Martino Borello.
\newblock Minimal linear codes arising from blocking sets.
\newblock {\em Journal of Algebraic Combinatorics}, 2020.

\bibitem{CM07}
Qi~Cheng and Elizabeth Murray.
\newblock On deciding deep holes of {R}eed-{S}olomon codes.
\newblock {\em Lecture Notes in Computer Science}, 4484:296--305, 2007.

\bibitem{Fink73}
A.M. Fink.
\newblock Certain determinants related to the {V}andermonde.
\newblock {\em Proceedings of the American Mathematical Society},
  38(3):483--488, 1973.

\bibitem{Kaipa17}
Krishna Kaipa.
\newblock Deep holes and {MDS} extensions of {R}eed-{S}olomon codes.
\newblock {\em IEEE Trans. Inform. Theory}, 63(8):4940--4948, 2017.

\bibitem{LW08}
Jiyou Li and Daqing Wan.
\newblock On the subset sum problem over finite fields.
\newblock {\em Finite Fields and Their Applications}, 14(4):911--929, 2008.

\bibitem{book:MW}
F.J. MacWilliams and N.J.A. Sloane.
\newblock {\em The Theory of Error-Correcting Codes}.
\newblock North-holland Publishing Company, 2nd edition, 1978.

\bibitem{Seg55}
Beniamino Segre.
\newblock Ovals in a finite projective plane.
\newblock {\em Canad. J. Math.}, 7:414--416, 1955.

\bibitem{SR86}
G.~{Seroussi} and R.~M. {Roth}.
\newblock On {MDS} extensions of generalized {Reed- Solomon} codes.
\newblock {\em IEEE Transactions on Information Theory}, 32(3):349--354, May
  1986.

\bibitem{W95}
Daqing Wan.
\newblock A {Chevalley-Warning} approach to $p$-adic estimates of character
  sums.
\newblock {\em Proceedings of the American Mathematical Society},
  123(1):45--54, 1995.

\bibitem{War35}
Ewald Waring.
\newblock Bemerkung zur vorstehenden {A}rbeit von {H}errn {C}hevalley.
\newblock {\em Abh. Math. Sem. Univ. Hamburg}, 11(1):76--83, 1935.

\bibitem{ZFL13}
Jun Zhang, Fang-Wei Fu, and Qunying Liao.
\newblock New deep holes of generalized {R}eed-{S}olomon codes.
\newblock {\em Scientia Sinica}, 43(7):727--740, 2013.

\bibitem{ZCL16}
J.~Zhuang, Q.~Cheng, and J.~Li.
\newblock On determining deep holes of generalized {Reed-Solomon} codes.
\newblock {\em IEEE Transactions on Information Theory}, 62(1):199--207, Jan
  2016.

\end{thebibliography}

\end{document}